\documentclass{article}[12pt]
\usepackage{amsfonts,amsmath,amsthm,yfonts,mathdots,
latexsym,graphicx,amssymb,hyperref,url,fancyhdr,lineno,tikz,float}
\usepackage{amsfonts,amsmath,latexsym,hyperref,graphicx,amssymb,amsthm,url,lineno,float}
\usepackage[abbrev,msc-links]{amsrefs}  
\usetikzlibrary{arrows}
\usepackage{rotating}
\usetikzlibrary{calc}
\usetikzlibrary{positioning}
\usetikzlibrary{patterns}
\usetikzlibrary{shapes}
\usepackage{subcaption}


\title{A result in asymmetric Euclidean Ramsey theory}

\author{Andrii Arman\thanks{University of Manitoba, \url{armana@myumanitoba.ca}}
\and Sergei Tsaturian\thanks{University of Manitoba, \url{tsaturis@cc.umanitoba.ca}}}
\date{15 February 2017}

\begin{document}
\maketitle

\begin{abstract}
It is proved that if the points of the three-dimensional Euclidean space are coloured in red and blue, then there exist either two red points unit distance apart, or six collinear blue points with distance one between any two consecutive points.
\end{abstract}

\newtheorem{theorem}{Theorem}[section]
\newtheorem{lemma}[theorem]{Lemma}
\newtheorem{corollary}[theorem]{Corollary}
\newtheorem{conjecture}{Conjecture}
\newtheorem{definition}[theorem]{Definition}
\newtheorem{problem} [theorem]{Problem}
\newtheorem{proposition} [theorem]{Proposition}
\newtheorem{example}[theorem]{Example}
\newtheorem{question}{Question}


\section{Introduction}
The area of Euclidean Ramsey theory is mostly concerned with colouring points of an Euclidean space $\mathbb{E}^n$ and looking for a geometric graph whose points are monochromatic. Research in this area was surveyed in \cites{erdos1, erdos2, erdos3} by Erd\H os, Graham, Montgomery, Rothschild, Spencer,
and Straus.\\
Asymmetric Euclidean Ramsey theory deals with the questions of the following type. If $F_1$ and $F_2$ are two different configurations of points in an Euclidean space, is it true that for any colouring of the points of an Euclidean space $\mathbb{E}^n$ in red and blue, there always exists either a congruent copy of $F_1$ with all of the vertices red, or a congruent copy of $F_2$ with all of the vertices blue? In the case of affirmative answer, the ``Ramsey arrow`` notation is used:
$$\mathbb{E}^n\rightarrow (F_1,F_2).$$
The results of Erd\H os et al. \cites{erdos1,erdos2,erdos3} include:
\begin{itemize}
\item If $T_i$ is a configuration of $i$ points, then $\mathbb{E}^2\rightarrow (T_2,T_3);$
\item If $T$ denotes the isosceles right triangle with unit catheti (legs) and $Q^2$ denotes the unit square, then $\mathbb{E}^3\rightarrow (T,Q^2);$
\item Let $\ell_i$ denote the configuration of $i$ collinear points with distance $1$ between any two consecutive points. Then $\mathbb{E}^2\rightarrow (\ell_2,\ell_4)$ and $\mathbb{E}^4\rightarrow (\ell_2,\ell_5)$.
\end{itemize}
Juh\'asz~\cite{juhasz} proved that if $T_4$ is any configuration of $4$ points, then $\mathbb{E}^2\rightarrow (\ell_2,T_4)$. 
Juh\'asz (personal
communication, 10 February, 2017) also informed us that Iv\'an's master thesis~\cite{ivan} contains a proof that for any configuration $T_5$ of $5$ points, $\mathbb{E}^3\rightarrow (\ell_2,T_5)$, but this result was never published.


It was asked by Erd\H os et al. \cite{erdos2} whether $\mathbb{E}^3\rightarrow (\ell_2,\ell_5)$. The result of Iv\'an~\cite{ivan} implies the positive answer to this question. We present a simple proof of $\mathbb{E}^3\rightarrow (\ell_2,\ell_5)$ and prove a stronger result, namely that $\mathbb{E}^3\rightarrow (\ell_2,\ell_6)$.\\
For an overview of other results in Euclidean Ramsey theory, see Graham's survey \cite{graham}.

\section{Existence of a blue $\ell_5$}
\begin{theorem}\label{thm:l5}
Let the Euclidean space $\mathbb{E}^3$ be coloured in red and blue so that there are no two red points distance $1$ apart. Then there exist five blue points that form an $\ell_5$.
\end{theorem}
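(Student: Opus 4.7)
The plan is to argue by contradiction: assume that no five unit-spaced collinear blue points exist. Then on every line of $\mathbb{E}^3$, every window of five consecutive unit-spaced points must contain at least one red point; combined with the hypothesis that no two reds are at unit distance, on every line the reds form a discrete set with no two adjacent and every $5$-window nonempty, so that consecutive-red gaps lie in $\{2,3,4\}$.

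I would then exploit the blue unit sphere of a red point. Since $\mathbb{E}^3$ cannot be entirely blue (else any unit-spaced $5$-tuple would form a blue $\ell_5$), pick a red point $r_0$; every point at unit distance from $r_0$ must be blue. For every unit vector $v$, the line through $r_0$ in direction $v$ has $r_0+v$ blue, and the assumption forces one of $r_0+2v$, $r_0+3v$, $r_0+4v$, $r_0+5v$ to be red. Thus each unit direction $v$ carries a ``first red past the blue sphere'' at one of four possible distances from $r_0$. One can also pair this up with a second blue point on the sphere: choosing $w$ with $\angle(v,w)=60^\circ$ gives a unit-distance blue pair $r_0+v,\,r_0+w$, which launches a further $5$-segment in direction $w-v$ that must also produce a red.

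The heart of the proof is to choose a small, explicit finite configuration of directions $v_1,\dots,v_n$ (or equivalently of unit-spaced $5$-point segments emanating from a unit-rigid seed such as a regular triangle $\triangle ABC$ with two blue vertices, or a regular simplex in $\mathbb{E}^3$ with three blue vertices) so that every choice of ``first red past the sphere'' on each segment is forced by the three-dimensional geometry to contain two reds at unit distance, contradicting the red-independence hypothesis. I expect the main obstacle to be constructing this configuration explicitly: naive axis-aligned grids are insufficient, since the $5\times5\times5$ integer lattice admits the ``checkerboard'' red transversal $\{(x,y,z):x+y+z\text{ even}\}$, which meets every axis-aligned $\ell_5$ yet contains no unit-distance pair. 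The configuration must therefore include $\ell_5$'s in non-axis-aligned directions whose pairwise unit-distance arithmetic rules out every independent transversal, and identifying such a configuration with few enough points to analyse by hand is where the real work lies.
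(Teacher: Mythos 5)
Your proposal sets up a sensible contradiction framework (every unit-spaced $5$-window on every line must contain a red point, no two reds at distance $1$, a red point forces a blue unit sphere), but the central step is missing: you never produce the finite configuration of segments whose every red transversal is forced to contain a unit-distance red pair, and you say explicitly that constructing it ``is where the real work lies.'' As written this is a strategy, not a proof; your own checkerboard example shows the difficulty is genuine, and nothing in the proposal indicates how it would be overcome. (A minor slip as well: the gaps between consecutive reds along a unit progression lie in $\{2,3,4,5\}$, not $\{2,3,4\}$, since a gap of $5$ leaves only four consecutive blue points.)

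The paper sidesteps the finite-configuration hunt entirely by using rotational symmetry, which is the mechanism your plan lacks. If $A,B$ are red with $|AB|=2$, their unit spheres are blue; every line parallel to $AB$ meeting the four blue circles in the planes $x=-\tfrac12,\tfrac12,\tfrac32,\tfrac52$ carries a blue $\ell_4$, so its continuation at $x=\tfrac72$ must be red, and as the line rotates about $AB$ these forced points sweep out an entire red circle of radius $\tfrac{\sqrt3}{2}$, whose diameter $\sqrt3>1$ yields two red points at distance $1$. The same rotation argument forbids two reds at distance $\sqrt7$. Then around any red point $A$ the circles $\{(\pm1,y,z):y^2+z^2=3\}$ (distance $2$ from $A$) and $\{(\pm2,y,z):y^2+z^2=3\}$ (distance $\sqrt7$ from $A$) are blue, which forces the whole circle $\{(0,y,z):y^2+z^2=3\}$ to be red --- again a red circle of radius greater than $\tfrac12$, a contradiction. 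The lesson for your approach: instead of searching for a rigid finite witness, let a one-parameter family of parallel lines force a continuum of red points; any red circle of radius at least $\tfrac12$ already contains the unit-distance pair you need.
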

To prove Theorem~\ref{thm:l5}, it is first proved (in the following two lemmas) that if there is a colouring that forbids red $\ell_2$ and blue $\ell_5$, there are no two red points distance $2$ or $\sqrt{7}$ apart.
\begin{lemma}\label{l:l52}
Let $\mathbb{E}^3$ be coloured in red and blue so that there are no two red points distance $1$ apart. If there are no five blue points forming an $\ell_5$, then there are no two red points distance $2$ apart.
\end{lemma}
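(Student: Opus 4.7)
My plan is to prove the contrapositive by contradiction: assume there exist two red points $R_1,R_2$ at distance $2$ and exhibit either a red $\ell_2$ or a blue $\ell_5$. I would place $R_1=(-1,0,0)$ and $R_2=(1,0,0)$, and immediately conclude that the full unit spheres $S_1,S_2$ centered at $R_1,R_2$ must be blue, since any red point on $S_i$ would form a red $\ell_2$ with $R_i$.

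The plan is to find a one-parameter family of parallel lines, each carrying many forced blue points. I would consider, for each $\alpha\in[0,2\pi)$, the line $L_\alpha$ parallel to the $R_1R_2$ axis and passing through $\bigl(0,\tfrac{\sqrt{3}}{2}\cos\alpha,\tfrac{\sqrt{3}}{2}\sin\alpha\bigr)$. A direct intersection computation shows that $L_\alpha\cap S_1$ consists of the two points with $x\in\{-3/2,-1/2\}$ and $L_\alpha\cap S_2$ consists of the two points with $x\in\{1/2,3/2\}$. These four points are blue and are consecutive on $L_\alpha$ at unit spacing.

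The key step is to look at the next point $P_\alpha:=\bigl(5/2,\tfrac{\sqrt{3}}{2}\cos\alpha,\tfrac{\sqrt{3}}{2}\sin\alpha\bigr)$ on $L_\alpha$. If $P_\alpha$ is blue for some $\alpha$, then the five points on $L_\alpha$ with $x\in\{-3/2,-1/2,1/2,3/2,5/2\}$ form a blue $\ell_5$ and we are done. Otherwise every $P_\alpha$ is red, so the entire circle $C:=\{P_\alpha:\alpha\in[0,2\pi)\}$, which lies in the plane $x=5/2$ with radius $\tfrac{\sqrt{3}}{2}$, consists of red points.

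The conclusion is then immediate: since $C$ has diameter $\sqrt{3}>1$, it contains two points at Euclidean distance exactly $1$ (namely, points at angular separation $2\arcsin(1/\sqrt{3})$), and these two points form a red $\ell_2$, contradicting the hypothesis. I expect the main delicate calibration to be the choice of offset $\tfrac{\sqrt{3}}{2}$: it is essentially the unique value for which the four intersection points with $S_1\cup S_2$ land at consecutive unit spacings along $L_\alpha$, and simultaneously it keeps the radius of the circle $C$ strictly above $1/2$ so that $C$ is forced to contain a unit chord.
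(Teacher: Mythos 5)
Your proof is correct and is essentially the paper's own argument: you intersect lines parallel to $R_1R_2$ with the two blue unit spheres to obtain four consecutive blue points at unit spacing, force the fifth point (sweeping out a circle of radius $\tfrac{\sqrt{3}}{2}$ in the plane $x=\tfrac52$) to be red, and then find a unit chord on that red circle, exactly as in the paper (up to a translation of coordinates). No gaps; the only cosmetic difference is the choice of origin.
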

\begin{proof}
Assume that points $A$ and $B$ are red and $|AB|=2$. Choose a rectangular coordinate system centered at $A$ so that $B$ has coordinates $(2,0,0)$. Then any point at distance $1$ to $A$ is blue, otherwise there are two red points distance $1$ apart. In particular, the circles $\{(-\frac12,y,z):y^2+z^2=\frac34\}$ and $\{(\frac12,y,z):y^2+z^2=\frac34\}$ are blue (see Figure~\ref{fig:1}). Similarly, since any point at distance $1$ to $B$ is blue, the circles $\{(\frac32,y,z):y^2+z^2=\frac34\}$ and $\{(\frac52,y,z):y^2+z^2=\frac34 \}$ are blue. Then, for any point $P_1$ on the first circle, the line $\ell$ through $P_1$ parallel to $AB$ intersects the other three circles at some points $P_2, P_3, P_4$ that together with $P_1$ form a blue $\ell_4$. Therefore the point $P_5$ of intersection of $\ell$ with the circle $C=\{(\frac72,y,z):y^2+z^2=\frac34\}$ is red (otherwise $P_1, P_2, P_3, P_4, P_5$ form a blue $\ell_5$). Since $P_1$ can be any point on $\{(-\frac12,y,z):y^2+z^2=\frac34\}$, the whole circle $C$ is red. Since the radius of $C$ is $\frac{\sqrt{3}}{2}$, $C$ contains two red points distance $1$ apart, which contradicts the assumptions of the lemma.
\end{proof}

\begin{figure}[H]
\begin{center}
\begin{tikzpicture}[line cap=round,line join=round,>=triangle 45,x=1.0cm,y=1.0cm, scale=1.5]
\tikzstyle{dc}   = [circle, minimum width=6pt, draw, inner sep=0pt, path picture={\draw (path picture bounding box.south east) -- (path picture bounding box.north west) (path picture bounding box.south west) -- (path picture bounding box.north east);}]
\clip(-1,-1.3) rectangle (7,1);
\draw [color=black, opacity=0.5] (0,0)--(5.5,0);
\draw [color=black] (-0.5, 0)++(-30:0.2cm and 0.866cm)--+(5.5,0);
\draw[color=black, dashed] (0.5, 0)+(-30:0.2cm and 0.866cm)--(1,0);
\draw[color=black, dashed] (1.5, 0)+(-30:0.2cm and 0.866cm)--(1,0);
\draw[color=black, dashed] (2.5, 0)+(-30:0.2cm and 0.866cm)--(3,0);
\draw[color=black, dashed] (3.5, 0)+(-30:0.2cm and 0.866cm)--(3,0);

\node [above] at (1,0.1) {{$A$}};
\node [above] at (3,0.1) {{$B$}};


\foreach \a in {0,1,2,3}
\draw [color=blue, dashed] (0.5+\a,-0.866) arc (-90:-270:0.2 and 0.866);

\foreach \a in {0,1,2,3}
\draw [color=blue] (0.5+\a,-0.866) arc (-90:90:0.2 and 0.866);

\draw [color=red, dashed] (4.5,-0.866) arc (-90:-270:0.2 and 0.866);
\draw [color=red] (4.5,-0.866) arc (-90:90:0.2 and 0.866);

\draw[fill=black] (4.5,0)+(20:0.35cm and 1.266cm) circle (0pt) node {$C$};
\draw  (4.5, 0)++(-30:0.2cm and 0.866cm) node[diamond, scale=0.5, fill=red] {};
\foreach \a in {1,2,3,4,5}
\draw[fill] (-0.5+\a, 0)++(-30:0.4cm and 1.266cm) circle (0pt) node {$P_{\a}$};
\foreach \a in {0,1,2,3}
\draw [fill=blue] (0.5+\a, 0)+(-30:0.2cm and 0.866cm) circle (2pt);
\draw (1,0) node[diamond, scale=0.5, fill=red] {};
\node[diamond, scale=0.5, fill=red, scale=1] at (3,0) {};

\node[diamond, fill=red, scale=0.5] at (0,-1.2) {};
\draw[fill=red] (0.8,-1.2) circle (0pt) node { -- red point};

\draw [fill=blue] (2,-1.2) circle (2pt);
\draw[fill=red] (2.8,-1.2) circle (0pt) node { -- blue point};

\end{tikzpicture}
\end{center}
\caption{}\label{fig:1}
\end{figure}

\begin{lemma}\label{l:l5r7}
Let $\mathbb{E}^3$ be coloured in red and blue so that there are no two red points distance $1$ apart. If there are no five blue points forming an $\ell_5$, then there are no two red points distance $\sqrt{7}$ apart.
\end{lemma}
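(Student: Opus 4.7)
The plan is to follow the template of Lemma~\ref{l:l52}: assume $A,B$ are red with $|AB|=\sqrt 7$, use the known blue spheres to find four equally spaced (by $1$) blue collinear points forming an $\ell_4$, then force the fifth point of the arithmetic progression to lie on a red circle and derive a contradiction. Place $A=(0,0,0)$ and $B=(\sqrt 7,0,0)$; by the hypothesis of the lemma together with Lemma~\ref{l:l52}, all four spheres $S_A^1,S_A^2,S_B^1,S_B^2$ (of radii $1$ and $2$ around $A$ and $B$) are entirely blue.

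Unlike in Lemma~\ref{l:l52}, no line \emph{parallel} to $AB$ meets these spheres in four equally spaced points at gap~$1$, since the radii of the available blue circles in planes perpendicular to $AB$ fail to line up at consecutive integer positions. Instead, I would take $L_0$ to cross \emph{between} $A$ and $B$: let $L_0$ be the line in the plane $z=0$ through the midpoint $M=(\sqrt 7/2,0,0)$ with unit direction $\vec v=(2/\sqrt 7,\sqrt{3/7},0)$. A short computation shows that the point at arc-length parameter $s$ from $M$ along $L_0$ is at distance $\sqrt{(s+1)^2+3/4}$ from $A$ and $\sqrt{(s-1)^2+3/4}$ from $B$. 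Setting these equal to $1$ identifies four blue points $P_1,P_2,P_3,P_4$ at parameters $s=-3/2,-1/2,1/2,3/2$, equally spaced with gap~$1$; here $P_1,P_2\in S_A^1$ and $P_3,P_4\in S_B^1$.

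Consider the fifth point $P_5$ of the arithmetic progression, at $s=5/2$. The same formulas give $|AP_5|=\sqrt{13}$ and $|BP_5|=\sqrt 3$, so $P_5$ lies on none of the four blue spheres. If $P_5$ is blue then $P_1,\ldots,P_5$ is a blue $\ell_5$, contradicting the hypothesis. Otherwise $P_5$ is red. Since each of $S_A^1,S_A^2,S_B^1,S_B^2$ is invariant under rotations about the line $AB$, rotating $L_0$ about $AB$ by any angle $\theta$ carries $P_1,\ldots,P_4$ to blue points on the same spheres, while $P_5$ sweeps a circle $\mathcal C$ in a plane perpendicular to $AB$; applying the same argument to every rotate of $L_0$, every point of $\mathcal C$ must be red. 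The radius of $\mathcal C$ is the perpendicular distance from $P_5$ to the $x$-axis, namely $5\sqrt{21}/14>1/2$, so $\mathcal C$ contains two red points at Euclidean distance~$1$, contradicting the hypothesis. The only delicate step is producing the right line $L_0$; the algebraic identity $(\sqrt 3/2+\sqrt 3/2)^2+2^2=7=|AB|^2$ is exactly what guarantees that a line exists with $A$ and $B$ on opposite sides, at perpendicular distance $\sqrt 3/2$ each and with perpendicular feet $2$ apart, which in turn places two intersections with $S_A^1$ and two with $S_B^1$ at unit spacing.
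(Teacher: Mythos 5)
Your proposal is correct and is essentially the paper's own argument: the four unit-spaced blue points you find on $L_0$ (two on the unit sphere about $A$, two on the unit sphere about $B$, with $A$ and $B$ on opposite sides of the line) are exactly the paper's $P_1,\dots,P_4$ given by the two equilateral triangles $AP_1P_2$ and $BP_3P_4$, and the rotation of the forced-red fifth point about $AB$ yields the same red circle of radius $\tfrac{5\sqrt{3}}{2\sqrt{7}}=\tfrac{5\sqrt{21}}{14}$, which contains two red points at distance $1$. The only cosmetic difference is your parametrization through the midpoint and the (unused) invocation of the radius-$2$ blue spheres from Lemma~\ref{l:l52}.
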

\begin{proof}
Assume that points $A$ and $B$ are red and $|AB|=\sqrt{7}$. Choose a rectangular coordinate system so that $A$ has coordinates $(-\frac{\sqrt{7}}{2},0,0)$ and $B$ has coordinates $(\frac{\sqrt{7}}{2},0,0)$. Since $A$ and $B$ are $\sqrt{7}$ apart, there exist points $P_1,P_2,P_3,P_4$, all collinear, such that $|P_1P_2|=|P_2P_3|=|P_3P_4|=1$ and the triangles $AP_1P_2$ and $BP_3P_4$ are equilateral (see Figure~\ref{fig:2}). Let $P_1,P_2,P_3,P_4$ have coordinates $P_1(-\frac{3}{\sqrt{7}},\frac{3\sqrt{3}}{2\sqrt{7}},0)$, $P_2(-\frac{1}{\sqrt{7}},\frac{\sqrt{3}}{2\sqrt{7}},0)$, $P_3(\frac{1}{\sqrt{7}},-\frac{\sqrt{3}}{2\sqrt{7}},0)$, $P_4(\frac{3}{\sqrt{7}},-\frac{3\sqrt{3}}{2\sqrt{7}},0)$). Then $P_1, P_2, P_3, P_4$ form a blue $\ell_4$, therefore the point $P_5(\frac{5}{\sqrt{7}}, -\frac{5\sqrt{3}}{2\sqrt{7}},0)$ is red. When the line $P_1P_2P_3P_4$ is rotated about $AB$, points $P_1, P_2, P_3, P_4$ span four blue circles (since every point on the circles is at distance $1$ to either $A$ or $B$), therefore $P_5$ (when rotated) spans a red circle with radius $\frac{5\sqrt{3}}{2\sqrt{7}}>1$ that contains two red points distance $1$ apart.
\end{proof}

\begin{figure}[H]
\begin{center}
\begin{tikzpicture}[line cap=round,line join=round,>=triangle 45,x=1.0cm,y=1.0cm, scale=1]

\clip(-4.5,-2.2) rectangle (7,1.7);
\begin{scope}[xshift=-3cm]
\draw [color=black, opacity=0.5] (0, 0)--(2.6457,0);
\draw [color=black] (3.2126,-1.6365)--(0.189,0.9819);
\draw [color=black] (0, 0)--(0.189,0.9819);
\draw[color=black] (0, 0)--(0.9449,0.3273);
\draw[color=black] (2.6457,0)--(1.7008,-0.3273);
\draw[color=black] (2.6457,0)--(2.4567,-0.9819);
\draw [fill=red] (0.,0.) node[diamond, scale=0.5, fill=red] {};
\node [left] at (0,0) {{$A$}};

\draw [fill=red] (2.6457,0) node[diamond, scale=0.5, fill=red] {};
\draw[fill=black] (2.6457+0.2,0) circle (0pt) node {$B$};

\draw [fill=blue] (0.189,0.9819) circle (2pt);
\draw [fill=black] (0.189,1.2819) circle (0pt) node {$P_1$};

\draw [fill=blue] (0.9449,0.3273) circle (2pt);
\draw [fill=black] (0.9449,0.6273) circle (0pt) node {$P_2$};

\draw [fill=blue] (1.7008,-0.3273) circle (2pt);
\draw [fill=black] (1.7008,-0.6273) circle (0pt) node {$P_3$};

\draw [fill=blue] (2.4567,-0.9819) circle (2pt);
\draw [fill=blue] (2.4567,-1.2819) circle (0pt) node {$P_4$};

\draw [fill=red] (3.2126,-1.6365) node[diamond, scale=0.5, fill=red] {};
\draw [fill=blue] (3.2126,-1.9365) circle (0pt) node {$P_5$};
\end{scope}

\begin{scope}[xshift=2cm]
\draw [color=black, opacity=0.5] (0, 0)--(2.6457,0);
\draw [color=black] (3.2126,-1.6365)--(0.189,0.9819);
\draw [color=black] (0, 0)--(0.189,0.9819);
\draw[color=black] (0, 0)--(0.9449,0.3273);
\draw[color=black] (2.6457,0)--(1.7008,-0.3273);
\draw[color=black] (2.6457,0)--(2.4567,-0.9819);

\draw [color=blue, dashed] (0.189,0.9819) arc (90:270:0.3 and 0.9819);
\draw [color=blue] (0.189,0.9819) arc (90:-90:0.3 and 0.9819);

\draw [color=blue, dashed] (0.9449,0.3273) arc (90:270:0.1 and 0.3273);
\draw [color=blue] (0.9449,0.3273) arc (90:-90:0.1 and 0.3273);

\draw [color=blue, dashed] (1.7008,0.3273) arc (90:270:0.1 and 0.3273);
\draw [color=blue] (1.7008,0.3273) arc (90:-90:0.1 and 0.3273);

\draw [color=blue, dashed] (2.4567,0.9819) arc (90:270:0.3 and 0.9819);
\draw [color=blue] (2.4567,0.9819) arc (90:-90:0.3 and 0.9819);

\draw [color=red, dashed] (3.2126,-1.6365) arc (-90:-270:0.4 and 1.6365);
\draw [color=red] (3.2126,-1.6365) arc (-90:90:0.4 and 1.6365);

\draw [fill=red] (0.,0.) node[diamond, scale=0.5, fill=red] {};
\node [left] at (0,0) {{$A$}};
\draw [fill=red] (2.6457,0) node[diamond, scale=0.5, fill=red] {};
\draw[fill=black] (2.6457+0.2,0) circle (0pt) node {$B$};

\draw [fill=blue] (0.189,0.9819) circle (2pt);
\draw [fill=black] (0.189,1.2819) circle (0pt) node {$P_1$};

\draw [fill=blue] (0.9449,0.3273) circle (2pt);
\draw [fill=black] (0.9449,0.6273) circle (0pt) node {$P_2$};

\draw [fill=blue] (1.7008,-0.3273) circle (2pt);
\draw [fill=black] (1.7008,-0.6273) circle (0pt) node {$P_3$};

\draw [fill=blue] (2.4567,-0.9819) circle (2pt);
\draw [fill=blue] (2.4567,-1.2819) circle (0pt) node {$P_4$};

\draw [fill=red] (3.2126,-1.6365) node[diamond, scale=0.5, fill=red] {};
\draw [fill=blue] (3.2126,-1.9365) circle (0pt) node {$P_5$};

\end{scope}

\end{tikzpicture}
\end{center}
\caption{}\label{fig:2}
\end{figure}

\begin{proof}[Proof of Theorem~\ref{thm:l5}]
Let $\mathbb{E}^3$ be coloured in red and blue in a way that there is no red $\ell_2$ or blue $\ell_5$. Consider any red point $A$ and a rectangular coordinate system centered at $A$. By Lemma~\ref{l:l52}, any point at distance $2$ to $A$ is blue, in particular, the circles $\{(1,y,z):y^2+z^2=3\}$ and $\{(-1,y,z):y^2+z^2=3\}$ are blue (see Figure~\ref{fig:3}). Similarly, by Lemma~\ref{l:l5r7}, the circles $\{(2,y,z):y^2+z^2=3\}$ and $\{(-2,y,z):y^2+z^2=3\}$ are blue. Consider any point $P_3$ on the circle $\{(0,y,z):y^2+z^2=3\}$. The line through $P_3$ parallel to $y=z=0$ intersects the four blue circles at points $P_1, P_2, P_4$ and $P_5$ that together with $P_3$ form an $\ell_5$. Since $P_1, P_2, P_4, P_5$ are blue, $P_3$ is red. Therefore, the circle $\{(0,y,z):y^2+z^2=3\}$ is red and has radius $\sqrt{3}$, and so this circle contains two red points distance $1$ apart.
\end{proof}

\begin{figure}[H]
\begin{center}
\begin{tikzpicture}[line cap=round,line join=round,>=triangle 45,x=1.0cm,y=1.0cm, scale=1]
\tikzstyle{dc}   = [circle, minimum width=6pt, draw, inner sep=0pt, path picture={\draw (path picture bounding box.south east) -- (path picture bounding box.north west) (path picture bounding box.south west) -- (path picture bounding box.north east);}]

\clip(-4.5,-2) rectangle (7.5,2.5);
\begin{scope}[xshift=-1.5cm]
\draw [color=black, opacity=0.5] (-2,0)--(2, 0);
\draw [color=black, dashed] (0,0)++(60:2)--(0, 0);
\draw [color=black, dashed] (0,0)++(120:2)--(0, 0);
\draw [color=black, dashed] (-1,0)++(120:2)--+(4, 0);
\draw [color=black, dashed] (0,0)++(60:1)--+(-1, 0);
\draw [color=black, dashed] (0,0)++(60:1)++(120:1)--+(-60: 1);
\draw [color=black, dashed] (0,0)++(60:1)++(120:1)--+(-120: 1);

\draw [fill=red] (0.,0.) node[diamond, scale=0.5, fill=red] {};
\node [below] at (0,-0.1) {{$A$}};

\draw [fill=blue] (-1,0)++(120:2) circle (2pt);
\draw [fill=black] (-1,0.3)++(120:2) circle (0pt) node {$P_1$};

\draw [fill=blue] (0,0)++(120:2) circle (2pt);
\draw [fill=black] (0,0.3)++(120:2) circle (0pt) node {$P_2$};

\draw [fill=red] (0,0)++(60:1)++(120:1) node[diamond, scale=0.5, fill=red] {};
\draw [fill=black] (0,0.3)++(60:1)++(120:1) circle (0pt) node {$P_3$};

\draw [fill=blue] (0,0)++(60:2) circle (2pt);
\draw [fill=blue] (0,0.3)++(60:2) circle (0pt) node {$P_4$};

\draw [fill=blue] (1,0)++(60:2) circle (2pt);
\draw [fill=blue] (1,0.3)++(60:2) circle (0pt) node {$P_5$};
\end{scope}

\begin{scope}[xshift=1cm]

\end{scope}

\begin{scope}[xshift=4.5cm]
\draw [color=black, opacity=0.5] (-2,0)--(2, 0);
\draw [color=black, dashed] (-1,0)++(120:2)--+(4, 0);

\foreach \a in {1,2,4,5}
\draw [color=blue, dashed] (-3+\a,0)++(60:1)++(120:1) arc (90:270:0.4 and 1.732);

\foreach \a in {1,2,4,5}
\draw [color=blue] (-3+\a,0)++(60:1)++(120:1) arc (90:-90:0.4 and 1.732);

\draw [color=red, dashed] (0,0)++(60:1)++(120:1) arc (90:270:0.4 and 1.732);
\draw [color=red] (60:1)++(120:1) arc (90:-90:0.4 and 1.732);


\draw [fill=red] (0.,0.) node[diamond, scale=0.5, fill=red] {};
\node [below] at (0,-0.1) {{$A$}};

\draw [fill=blue] (-1,0)++(120:2) circle (2pt);
\draw [fill=black] (-1,0.3)++(120:2) circle (0pt) node {$P_1$};

\draw [fill=blue] (0,0)++(120:2) circle (2pt);
\draw [fill=black] (0,0.3)++(120:2) circle (0pt) node {$P_2$};

\draw [fill=red] (0,0)++(60:1)++(120:1) node[diamond, scale=0.5, fill=red] {};
\draw [fill=black] (0,0.3)++(60:1)++(120:1) circle (0pt) node {$P_3$};

\draw [fill=blue] (0,0)++(60:2) circle (2pt);
\draw [fill=blue] (0,0.3)++(60:2) circle (0pt) node {$P_4$};

\draw [fill=blue] (1,0)++(60:2) circle (2pt);
\draw [fill=blue] (1,0.3)++(60:2) circle (0pt) node {$P_5$};

\end{scope}

\end{tikzpicture}
\caption{}\label{fig:3}
\end{center}
\end{figure}

\section{Existence of a blue $\ell_6$}

\begin{theorem}\label{thm:l6}
Let the Euclidean space $\mathbb{E}^3$ be coloured in red and blue so that there are no two red points distance $1$ apart. Then there exist six blue points that form an $\ell_6$.
\end{theorem}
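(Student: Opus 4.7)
The plan is to extend the proof of Theorem~\ref{thm:l5} by one step, using three concentric blue spheres around a red point instead of two. Concretely, for any red point $A$, I would show that the spheres of radii $1$, $\sqrt{3}$ and $\sqrt{7}$ centred at $A$ are all blue. Then any line $\ell$ at perpendicular distance $\sqrt{3}/2$ from $A$ meets these three spheres at points whose signed positions along $\ell$, measured from the foot of the perpendicular, are $\pm 1/2$, $\pm 3/2$ and $\pm 5/2$; this produces six collinear blue points at consecutive distance $1$, i.e.\ a blue $\ell_6$, contradicting the assumption.

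The hypothesis already gives that the sphere of radius $1$ around every red point is blue. The other two spheres would come from two auxiliary lemmas in the $\ell_6$ setting: \emph{no two red points at distance $\sqrt{7}$} and \emph{no two red points at distance $\sqrt{3}$}. The $\sqrt{7}$-lemma should follow by reusing the rhombus configuration of Lemma~\ref{l:l5r7}: starting from the four blue points $P_1,\ldots,P_4$ on the inclined line, one extends to $P_0$ and $P_5$; a direct computation gives $|AP_0|=|BP_5|=\sqrt{3}$, so once the $\sqrt{3}$-lemma is established both extensions are blue and $P_0,P_1,P_2,P_3,P_4,P_5$ is a blue $\ell_6$.

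The main obstacle is the $\sqrt{3}$-lemma. The two-anchor strategy that sufficed for Lemmas~\ref{l:l52} and~\ref{l:l5r7} does not apply here: the unit spheres of two red points at distance $\sqrt{3}$ intersect any line in at most four points, and the equilateral-triangle rhombus construction only yields a blue $\ell_4$ when $|AB|\in[2,\sqrt{7}]$. To address this I would attempt a rotation argument in the spirit of Lemma~\ref{l:l52}, combined with Theorem~\ref{thm:l5} itself: pick a suitable auxiliary line through the configuration, rotate about the $AB$-axis so that the relevant extension points sweep out full circles, and show that if those circles were not entirely blue they would contain red chords of length $1$. Invoking the blue $\ell_5$ guaranteed by Theorem~\ref{thm:l5} should supply the fifth collinear blue point that the two unit spheres alone cannot provide. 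Carrying out this argument — in particular, arranging the auxiliary line so that the extension points land on circles of radius at least $1/2$ — is where the real work of the theorem lies.
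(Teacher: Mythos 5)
Your reduction steps are sound as far as they go: a line at perpendicular distance $\sqrt{3}/2$ from a red point $A$ does meet the spheres of radii $1$, $\sqrt{3}$, $\sqrt{7}$ about $A$ at signed positions $\pm\frac12,\pm\frac32,\pm\frac52$, so blueness of those three spheres would give a blue $\ell_6$; and your computation behind the $\sqrt{7}$-step is correct (extending the configuration of Lemma~\ref{l:l5r7} by one unit at each end gives points at distance $\sqrt{3}$ from $A$ and $B$, so the $\sqrt{7}$-exclusion does follow from a $\sqrt{3}$-exclusion). But the proposal then stops exactly at the load-bearing claim: you never prove that there are no two red points at distance $\sqrt{3}$, and you say yourself that this is ``where the real work of the theorem lies.'' That is a genuine gap, not a detail: without the $\sqrt{3}$-lemma neither the $\sqrt{7}$-lemma nor the three-sphere argument gets off the ground, so the proposal proves nothing beyond Theorem~\ref{thm:l5}.

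Moreover, the sketch you offer for the $\sqrt{3}$-lemma is unlikely to work as stated. With $|AB|=\sqrt{3}$ the two unit spheres cannot force even a blue $\ell_4$ on any line (two unit chords centred at the projections of $A$ and $B$ would require those projections to be $2$ apart, hence $|AB|\geq 2$), so a rotation argument in the spirit of Lemma~\ref{l:l52} or~\ref{l:l62} has too few forced blue points on each rotated line; and Theorem~\ref{thm:l5} only guarantees a blue $\ell_5$ \emph{somewhere} in space, with no control over its position or direction relative to $A$ and $B$, so it cannot be ``invoked'' to supply a missing collinear blue point on the specific lines swept out by your rotation. The paper in fact never establishes a global $\sqrt{3}$-exclusion at all: it instead proves that there is no all-blue disk of radius $\sqrt{3}$ (Lemma~\ref{l:disk}), excludes red pairs at distances $2$, $4$ and $3$ (Lemmas~\ref{l:l62}, \ref{l:l64}, \ref{l:l63}), and then works inside a unit triangular lattice containing the blue $\ell_5$ from Theorem~\ref{thm:l5}, where the distance-$\sqrt{3}$ information enters only through the conditional lattice statement of Lemma~\ref{r3}. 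That machinery, or something of comparable strength, is what your proposal is missing.
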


For the sake of contradiction, it is assumed that there exists a colouring of $\mathbb{E}^3$ in red and blue without red $\ell_2$ and blue $\ell_6$. The following four lemmas are needed.

\begin{lemma}\label{l:disk}
Let $\mathbb{E}^3$ be coloured in red and blue so that there are no two red points distance $1$ apart. If there are no six blue points forming an $\ell_6$, then there is no disk with radius $\sqrt{3}$, such that all of its points (including interior) are blue.
\end{lemma}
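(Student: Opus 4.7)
The plan is to suppose for contradiction that a closed disk $D$ of radius $\sqrt{3}$ in $\mathbb{E}^3$ is entirely blue, to place $D$ in the plane $z=0$ centered at the origin after an isometry, and then to produce two red points at unit distance.

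The driving observation I would use is that every chord of $D$ of length at least $3$ carries a blue $\ell_4$: four collinear, unit-spaced points of the chord, all inside $D$ and hence blue. Since no blue $\ell_6$ exists, at least one of the two unit extensions of such an $\ell_4$ (the points on its supporting line at unit distance beyond each endpoint) must be red, for otherwise the $\ell_4$ together with its two blue extensions would form a blue $\ell_6$. In particular, for each $c\in[-\sqrt{3}/2,\sqrt{3}/2]$, the horizontal chord $y=c,\ z=0$ has length $2\sqrt{3-c^2}\ge 3$; its centered $\ell_4$ at $(\pm 1/2,c,0)$, $(\pm 3/2,c,0)$ therefore forces at least one of $(\pm 5/2,c,0)$ to be red. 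Rotating this family of chords about the $z$-axis, and also sliding the $\ell_4$ off-center within each chord, would produce many further disjunctive red constraints in the annular region of the plane $z=0$ just outside $D$.

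The final step is to combine these constraints into two red points at unit distance. As a first reduction, the candidates $(\pm 5/2,c,0)$ sweep out two parallel vertical segments of length $\sqrt{3}>1$: if on some interval of $c$ of length at least $1$ the ``forced red'' choice is consistently on one side, we obtain a unit-distance red pair directly. The hard case is when the red choice alternates between the two sides as $c$ varies. To rule this out I would invoke the rotated analogues, exploiting that two centered-$\ell_4$ extensions on diameters of $D$ whose directions differ by the angle $2\arcsin(1/5)$ sit at exactly unit Euclidean distance (since a chord of length $1$ on the circle of radius $5/2$ subtends that angle at the center); this yields additional ``at least one red'' constraints which interact strongly with the horizontal ones.

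The main obstacle is precisely this final combinatorial endgame: each constraint is disjunctive, so the freedom it allows accommodates intricate alternating patterns of reds. I expect that overcoming it requires a carefully chosen finite sub-family of chords (mixing rotated diameters and off-center placements) whose constraints become mutually inconsistent unless a unit-distance red pair appears.
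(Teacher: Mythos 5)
There is a genuine gap, and it sits exactly where you place it yourself: the ``combinatorial endgame.'' Your setup is fine --- any chord of $D$ of length at least $3$ carries a blue $\ell_4$, and the absence of a blue $\ell_6$ forces at least one of its two unit extensions to be red --- but this only yields disjunctive constraints, and you do not resolve the alternating case; the rotated-diameter idea with the angle $2\arcsin(1/5)$ just produces more disjunctions, and you concede you only \emph{expect} a suitable finite family to be inconsistent. As written, no contradiction is actually derived, so the proof is incomplete.

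The missing idea is to use the ``no red pair at distance $1$'' hypothesis to \emph{kill the disjunction} rather than to case-split on it. Take the blue $\ell_4$ $P_1P_2P_3P_4$ ending at a boundary point $P_4$ of $D$, let $P_5,P_6$ be its two unit extensions outside $D$, and consider an apex point $A$ at distance $1$ from both $P_5$ and $P_6$. If $A$ were red, then $P_5$ and $P_6$ would both be blue (no red--red unit pair), and $P_1\ldots P_6$ would be a blue $\ell_6$; hence $A$ is blue --- a definite conclusion, not an alternative. Rotating $P_4$ around the center of $D$ makes $A$ sweep an entirely blue circle $C$ of radius $\sqrt{(\sqrt3+\tfrac32)^2+(\tfrac{\sqrt3}{2})^2}=\sqrt{6+3\sqrt3}$. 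Now repeat the extension argument radially from $C$: for any $Q_6\in C$, the four unit-spaced points $Q_1,\ldots,Q_4$ lying $5,4,3,2$ units inward along the ray lie inside $D$ (since $-\sqrt3<\sqrt{6+3\sqrt3}-5$ and $\sqrt{6+3\sqrt3}-2<\sqrt3$) and are blue, and $Q_6$ is blue, so the intermediate point $Q_5$ must be red. Letting $Q_6$ run over $C$ gives an entire red circle of radius $\sqrt{6+3\sqrt3}-1>1$, which contains two red points at distance $1$ --- the desired contradiction. This two-step ``apex'' argument is what your sketch is missing; without it (or some completed substitute for your inconsistency claim), the lemma is not proved.
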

\begin{proof}
Suppose that there exists a blue disk $D$ with center $O$ and radius $\sqrt{3}$. Consider a rectangular coordinate system on the plane containing $D$, centered at $O$ (then $D=\{x^2+y^2\leq 3, x,y\in\mathbb{R}\}$). Let $P_4$ be any point on the boundary of $D$ (for simplicity, let the coordinates of $P_4$ be $(\sqrt{3},0)$. Then points $P_4$, $P_3(\sqrt{3}-1,0)$, $P_2(\sqrt{3}-2,0)$, $P_1(\sqrt{3}-3,0)$ belong $D$, and therefore are blue (see Figure~\ref{fig:4a}). Consider points $P_5(\sqrt{3}+1,0)$, $P_6(\sqrt{3}+2,0)$, and a point $A$ (say, $(\sqrt{3}+\frac32, \frac{\sqrt{3}}{2})$) at distance $1$ to both $P_5$ and $P_6$. If $A$ is red, then both $P_5$ and $P_6$ are blue and $P_1,P_2,P_3,P_4,P_5,P_6$ form a blue $\ell_6$. Therefore $A$ is blue. When the point $P_4$ is rotated around the center, the point $A$ (when rotated) spans a blue circle $C$ with radius $\sqrt{(\sqrt{3}+\frac32)^2+(\frac{\sqrt{3}}{2})^2}=\sqrt{6+3\sqrt{3}}$.

Consider any point $Q_6$ on $C$ (for simplicity, let the coordinates of $Q_6$ be $(\sqrt{6+3\sqrt{3}},0)$; see Figure~\ref{fig:4b}). Since $-\sqrt{3}<\sqrt{6+3\sqrt{3}}-5$ and $\sqrt{6+3\sqrt{3}}-2<\sqrt{3}$, points $Q_4(\sqrt{6+3\sqrt{3}}-2,0)$, $Q_3(\sqrt{6+3\sqrt{3}}-3,0)$, $Q_2(\sqrt{6+3\sqrt{3}}-4,0)$, $Q_1(\sqrt{6+3\sqrt{3}}-5,0)$ are all inside $D$, therefore blue. Then the point $Q_5(\sqrt{6+3\sqrt{3}}-1,0)$ is red (otherwise $Q_1Q_2Q_3Q_4Q_5Q_6$ is a blue $\ell_6$). If the point $Q_6$ is chosen arbitrarily on $C$, $Q_5$ spans a red circle with radius $\sqrt{6+3\sqrt{3}}-1>1$, that contains two red points distance 1 apart.
\end{proof}

\begin{figure}[H]
    \begin{subfigure}{0.5\linewidth}
        
\begin{tikzpicture}[line cap=round,line join=round,>=triangle 45,x=1.0cm,y=1.0cm, scale=0.75]

\clip(-3.4,-3.5) rectangle (4.1,3.5);

\draw [color=blue, pattern=north east lines, pattern color=blue] (0,0) circle (1.732cm);

\draw [fill=blue] (-1.2675,0) circle (2pt);
\draw [fill=blue] (-1.2675,0)++(0.1,0.3) circle (0pt) node {$P_1$};

\draw [fill=blue] (-0.2675,0) circle (2pt);
\draw [fill=blue] (-0.2675,0)++(0.1,0.3) circle (0pt) node {$P_2$};

\draw [fill=blue] (0.7325,0) circle (2pt);
\draw [fill=blue] (0.7325,0)++(0.1,0.3) circle (0pt) node {$P_3$};

\draw [fill=blue] (1.7325,0) circle (2pt);
\draw [fill=blue] (1.7325,0)++(0.1,0.3) circle (0pt) node {$P_4$};

\draw [fill=white] (2.7325,0) circle (2pt);
\draw [fill=blue] (2.7325,0)++(0.1,0.3) circle (0pt) node {$P_5$};

\draw [fill=white] (3.7325,0) circle (2pt);
\draw [fill=blue] (3.7325,0)++(0.1,0.3) circle (0pt) node {$P_6$};

\draw [fill=blue] (3.7325,0)++(120:1) circle (2pt);
\draw [fill=blue] (3.7325,0)++(120:1)++(0.1,0.3) circle (0pt) node {$A$};

\draw [color=blue] (0,0) circle (3.346cm);
\draw [fill=blue] (-30:3.346)++(0.4,-0.3) circle (0pt) node {$C$};

\draw [fill=blue] (-140:2)++(0.3,-0.3) circle (0pt) node {$D$};
            
\end{tikzpicture}
        \caption{}
        \label{fig:4a}
    \end{subfigure}%
    \begin{subfigure}{0.5\linewidth}
        \centering
\begin{tikzpicture}[line cap=round,line join=round,>=triangle 45,x=1.0cm,y=1.0cm, scale=0.75]
\clip(-3.4,-3.5) rectangle (4,3.5);

\draw [color=blue, pattern=north east lines, pattern color=blue] (0,0) circle (1.732cm);

\draw [fill=blue] (-1.654,0) circle (2pt);
\draw [fill=blue] (-1.654,0.3)++(0.2,0) circle (0pt) node {$Q_1$};

\draw [fill=blue] (-0.654,0) circle (2pt);
\draw [fill=blue] (-0.654,0.3)++(0.2,0) circle (0pt) node {$Q_2$};

\draw [fill=blue] (0.346,0) circle (2pt);
\draw [fill=blue] (0.346,0.3)++(0.2,0) circle (0pt) node {$Q_3$};

\draw [fill=blue] (1.346,0) circle (2pt);
\draw [fill=blue] (1.346,0.3)++(0.2,0) circle (0pt) node {$Q_4$};

\draw [fill=red] (2.346,0) node[diamond, scale=0.5, fill=red] {};
\draw [fill=blue] (2.346,0.3)++(0.2,0) circle (0pt) node {$Q_5$};

\draw [fill=blue] (3.346,0) circle (2pt);
\draw [fill=blue] (3.346,0.3)++(0.2,0) circle (0pt) node {$Q_6$};

\draw [color=blue] (0,0) circle (3.346cm);
\draw [color=red] (0,0) circle (2.346cm);

\draw [fill=blue] (-30:3.346)++(0.4,-0.3) circle (0pt) node {$C$};

\draw [fill=blue] (-140:2)++(0.3,-0.3) circle (0pt) node {$D$};

\end{tikzpicture}
        \caption{}
        \label{fig:4b}
    \end{subfigure}
    \caption{} 
    \label{fig:4}
\end{figure}

\begin{lemma}\label{l:l62}
Let $\mathbb{E}^3$ be coloured in red and blue so that there are no two red points distance $1$ apart. If there are no six blue points forming an $\ell_6$, then there are no two red points distance $2$ apart.
\end{lemma}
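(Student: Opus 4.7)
I would mirror the construction used in Lemma~\ref{l:l52}, adapted to the $\ell_6$ hypothesis. Assume for contradiction that $A$ and $B$ are red with $|AB|=2$, placing $A=(0,0,0)$ and $B=(2,0,0)$. Exactly as in Lemma~\ref{l:l52}, the spheres $S_A$ and $S_B$ are blue, so the four circles $C_i = \{(i-\tfrac12,y,z):y^2+z^2=\tfrac34\}$ at $x=-\tfrac12,\tfrac12,\tfrac32,\tfrac52$ are blue. For each $\theta\in[0,2\pi)$ set $P_i(\theta)=\bigl(i-\tfrac12,\tfrac{\sqrt3}{2}\cos\theta,\tfrac{\sqrt3}{2}\sin\theta\bigr)$; then $P_0(\theta),\ldots,P_3(\theta)$ are four blue unit-spaced collinear points on a line parallel to $AB$.

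\textbf{Key step.} Append the two further unit-spaced points $P_{-1}(\theta)$ at $x=-\tfrac32$ and $P_4(\theta)$ at $x=\tfrac72$ on that line. The six points $P_{-1}(\theta),\ldots,P_4(\theta)$ form a potential $\ell_6$, so the no-blue-$\ell_6$ hypothesis forces at least one of $P_{-1}(\theta)$ and $P_4(\theta)$ to be red, for every $\theta$. As $\theta$ varies these endpoints sweep out the circles $\Gamma_{-1}$ and $\Gamma_4$ of radius $\tfrac{\sqrt3}{2}$ in the planes $x=-\tfrac32$ and $x=\tfrac72$, both with diameter $\sqrt3>1$. Two points on either circle at angular separation $\alpha:=\arccos(\tfrac13)$ lie at Euclidean distance $1$; by Niven's theorem $\alpha/\pi$ is irrational, and this irrationality will drive the finishing argument.

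\textbf{Finishing, and the main obstacle.} Writing $R_{-1},R_4\subseteq[0,2\pi)$ for the sets of $\theta$ at which the corresponding point is red, the previous step gives $R_{-1}\cup R_4=[0,2\pi)$. If neither $\Gamma_{-1}$ nor $\Gamma_4$ contained two red points at distance $1$, then $R_j\cap(R_j+\alpha)=\emptyset$ for $j\in\{-1,4\}$; chasing these implications forces $R_{-1}+\alpha=R_4$ and $R_4+\alpha=R_{-1}$, so $R_{-1}$ is invariant under the irrational rotation by $2\alpha$, which by ergodicity would collapse it to a trivial set, contradicting the covering. The hard part is closing this last step without a measurability assumption on the colouring, since in general rotation-by-$2\alpha$ invariance is only a pointwise statement and one can construct (using choice) invariant sets that are neither null nor co-null. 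I expect the paper to circumvent this either by incorporating the adjacent $\ell_6$-windows $\{P_{-2},\ldots,P_3\}$ and $\{P_0,\ldots,P_5\}$ (which produce matching constraints involving the circles $\Gamma_{-2}$ and $\Gamma_5$ at $x=-\tfrac52$ and $x=\tfrac92$ and impose a rigid combinatorial structure on $R_{-1},R_4$), or by invoking Lemma~\ref{l:disk}: a long blue arc on one of $\Gamma_{-1},\Gamma_4$ together with the surrounding forced-blue cylindrical structure should yield a blue disk of radius $\sqrt3$ in an appropriately chosen plane, contradicting that lemma.
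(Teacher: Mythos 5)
There is a genuine gap, and it is not just the measurability issue you flag --- the finishing argument you sketch cannot work at all from the information you have extracted. Your key step only yields: for every $\theta$, at least one of $P_{-1}(\theta)$, $P_4(\theta)$ is red, i.e. $R_{-1}\cup R_4=[0,2\pi)$ with each $R_j$ avoiding angular separation $\alpha=\arccos\tfrac13$. But this constraint system is satisfiable by an actual colouring of the two circles: since $\alpha/\pi$ is irrational, each orbit of rotation by $\alpha$ is a bi-infinite path, which can be $2$-coloured alternately; taking $R_{-1}$ to be one colour class of such a colouring and $R_4$ its complement makes both sets $\alpha$-avoiding while covering the circle. (Your deduction ``$R_{-1}+\alpha=R_4$'' is also only an inclusion $R_{-1}+\alpha\subseteq R_4$, and the ergodicity step needs measurability, which an arbitrary colouring does not provide.) Since the circles $\Gamma_{-1}$ and $\Gamma_4$ are far apart, no cross constraints help, so no contradiction can be reached from this setup alone; additional geometric input is required, and your two suggested rescues are left entirely unsubstantiated.

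The paper's proof avoids the endpoint dichotomy altogether by a small but decisive trick that you missed: with the four blue circles at $x=-\tfrac12,\tfrac12,\tfrac32,\tfrac52$ and a line $\ell$ parallel to $AB$ through them (giving blue $P_1,\dots,P_4$), consider any point $X$ on the circle of radius $\tfrac{\sqrt3}{2}$ in the plane $x=4$ centred at the point of $\ell$ with $x$-coordinate $4$. Such an $X$ is at distance $1$ from \emph{both} candidate extension points $P_5$ (at $x=\tfrac72$) and $P_6$ (at $x=\tfrac92$) on $\ell$; hence if $X$ were red, $P_5$ and $P_6$ would both be blue and $P_1\cdots P_6$ would be a blue $\ell_6$. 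So $X$ is blue, the whole circle is blue, and rotating $\ell$ about $AB$ sweeps these circles into a blue disk of radius $\sqrt3$ in the plane $x=4$, contradicting Lemma~\ref{l:disk} (which the paper proves first precisely for this purpose). So while you correctly guessed that Lemma~\ref{l:disk} might be the relevant tool, the mechanism --- forcing an ``apex'' point equidistant from the two would-be extension points to be blue, rather than forcing one of the extension points to be red --- is the missing idea, and without it your argument does not close.
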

\begin{proof}
The setup is the same as in Lemma~\ref{l:l52}. Assume that points $A$ and $B$ are red and $|AB|=2$. Choose a rectangular coordinate system centered at $A$ so that $B$ has coordinates $(2,0,0)$. Then the circles $\{(-\frac12,y,z):y^2+z^2=\frac34\}$, $\{(\frac12,y,z):y^2+z^2=\frac34\}$, $\{(\frac32,y,z):y^2+z^2=\frac34\}$ and $\{(\frac52,y,z):y^2+z^2=\frac34\}$ are blue. Consider any line $\ell$ parallel to $AB$ that intersects all the blue circles (for simplicity, let $\ell=\{(x,-\frac{\sqrt{3}}{2},0), x\in\mathbb{R}\}$), and let $P_1, P_2, P_3, P_4$ be the points of intersection (see Figure~\ref{fig:5}, in this case $P_1(-\frac12,-\frac{\sqrt{3}}{2},0)$, $P_2(\frac12,-\frac{\sqrt{3}}{2},0)$, $P_3(\frac32,-\frac{\sqrt{3}}{2},0)$, $P_4(\frac52,-\frac{\sqrt{3}}{2},0)$). Let $X$ be any point on the circle $C$ with radius $\frac{\sqrt{3}}{2}$ centered at the point on $\ell$ with $x$-coordinate $4$ (in Figure~\ref{fig:5}, the center has coordinates $(4,-\frac{\sqrt{3}}{2},0)$). If $X$ is red, then the points $P_5$ and $P_6$ on $\ell$ with $x$-coordinates $\frac72$ and $\frac92$ (on the Figure~\ref{fig:5}, $P_5(\frac72,-\frac{\sqrt{3}}{2},0)$,  $P_6(\frac92,-\frac{\sqrt{3}}{2},0)$) are both at distance $1$ to $X$. Then $P_5$ and $P_6$ are blue, and $P_1P_2P_3P_4P_5P_6$ is a blue $\ell_6$. Therefore, $X$ is blue, hence the circle $C$ is blue. When $\ell$ is rotated around $AB$, the circle $C$ spans a blue disk with radius $\sqrt{3}$, which contradicts the statement of Lemma~\ref{l:disk}.
\end{proof}

\begin{figure}[H]
\begin{center}
\begin{tikzpicture}[line cap=round,line join=round,>=triangle 45,x=1.0cm,y=1.0cm, scale=1]
\tikzstyle{dc}   = [circle, minimum width=6pt, draw, inner sep=0pt, path picture={\draw (path picture bounding box.south east) -- (path picture bounding box.north west) (path picture bounding box.south west) -- (path picture bounding box.north east);}]

\clip(-4.5,-2.5) rectangle (7.5,2.5);
\begin{scope}[xshift=-3.5cm]
\draw [color=black, opacity=0.5] (0,0)--(4, 0);
\draw [color=black, opacity=0.5] (-1,0)++(-120:1)--+(6.2, 0);
\draw (-0.9,-0.5) circle (0pt) node{$\ell$};

\draw [fill=red] (0.,0.) node[diamond, scale=0.5, fill=red] {};
\node [above] at (0,0.1) {{$A$}};

\draw [fill=blue] (0,0)++(-120:1) circle (2pt);
\draw [fill=black] (0,-0.3)++(-120:1) circle (0pt) node {$P_1$};

\draw [fill=blue] (0,0)++(-60:1) circle (2pt);
\draw [fill=black] (0,-0.3)++(-60:1) circle (0pt) node {$P_2$};

\draw [fill=red] (2,0.) node[diamond, scale=0.5, fill=red] {};
\node [above] at (2,0.1) {{$B$}};

\draw [fill=blue] (2,0)++(-120:1) circle (2pt);
\draw [fill=black] (2,-0.3)++(-120:1) circle (0pt) node {$P_3$};

\draw [fill=blue] (2,0)++(-60:1) circle (2pt);
\draw [fill=black] (2,-0.3)++(-60:1) circle (0pt) node {$P_4$};

\draw [fill=blue] (3.5,0)++(-60:1)++(30:0.2 and 0.865) circle (2pt);
\draw [fill=black] (3.5,0)++(-60:1)++(30:0.2 and 0.865)++(0.3,0.2) circle (0pt) node {$X$};

\draw [fill=white] (4,0)++(-120:1) circle (2pt);
\draw [fill=black] (4,-0.3)++(-120:1) circle (0pt) node {$P_5$};

\draw [fill=white] (4,0)++(-60:1) circle (2pt);
\draw [fill=black] (4,-0.3)++(-60:1) circle (0pt) node {$P_6$};

\draw [color=blue, dashed] (4,0) arc (90:270:0.2 and 0.865);
\draw [color=blue] (4,0) arc (90:-90:0.2 and 0.865);

\draw [fill=black] (3.5,0)++(-60:1)++(270:0.2 and 0.865)++(0,-0.2) circle (0pt) node {$C$};

\end{scope}

\begin{scope}[xshift=2.5cm]
\draw [color=black, opacity=0.5] (0,0)--(4, 0);
\draw [color=black, opacity=0.5] (0,0)++(-120:1)--+(5, 0);

\draw [fill=red] (0.,0.) node[diamond, scale=0.5, fill=red] {};
\node [above] at (0,0.1) {{$A$}};

\draw [fill=blue] (0,0)++(-120:1) circle (2pt);
\draw [fill=black] (0,-0.3)++(-120:1) circle (0pt) node {$P_1$};

\draw [fill=blue] (0,0)++(-60:1) circle (2pt);
\draw [fill=black] (0,-0.3)++(-60:1) circle (0pt) node {$P_2$};

\draw [fill=red] (2,0.) node[diamond, scale=0.5, fill=red] {};
\node [above] at (2,0.1) {{$B$}};

\draw [fill=blue] (2,0)++(-120:1) circle (2pt);
\draw [fill=black] (2,-0.3)++(-120:1) circle (0pt) node {$P_3$};

\draw [fill=blue] (2,0)++(-60:1) circle (2pt);
\draw [fill=black] (2,-0.3)++(-60:1) circle (0pt) node {$P_4$};


\draw [fill=white] (4,0)++(-120:1) circle (2pt);
\draw [fill=black] (4,-0.3)++(-120:1) circle (0pt) node {$P_5$};

\draw [fill=white] (4,0)++(-60:1) circle (2pt);
\draw [fill=black] (4,-0.3)++(-60:1) circle (0pt) node {$P_6$};

\draw [color=blue, dashed] (4,0)++(60:1)++(120:1) arc (90:270:0.3 and 1.732);
\draw [color=blue] (4,0)++(60:1)++(120:1) arc (90:-90:0.3 and 1.732);

\draw[color=white, pattern=north east lines, pattern color=blue] (4,0) circle (0.3 and 1.732);

\draw [color=blue, dashed] (4,1.732) arc (90:270:0.3 and 1.732);
\draw [color=blue] (4,1.732) arc (90:-90:0.3 and 1.732);

\end{scope}

\end{tikzpicture}
\caption{}\label{fig:5}
\end{center}
\end{figure}

\begin{lemma}\label{l:l64}
Let $\mathbb{E}^3$ be coloured in red and blue so that there are no two red points distance $1$ apart. If there are no six blue points forming an $\ell_6$, then there are no two red points distance $4$ apart.
\end{lemma}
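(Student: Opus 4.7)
The strategy is to imitate Lemma~\ref{l:l62}: assume for contradiction that there exist red points $A,B$ with $|AB|=4$, and use them to force the existence of a blue disk of radius $\sqrt{3}$, contradicting Lemma~\ref{l:disk}.

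Set up coordinates with $A=(0,0,0)$ and $B=(4,0,0)$. Because no two red points are at distance $1$, the unit spheres centred at $A$ and at $B$ are entirely blue. For each $\theta\in[0,2\pi)$ consider the line $\ell_\theta$ parallel to $AB$ passing through $(0,\frac{\sqrt{3}}{2}\cos\theta,\frac{\sqrt{3}}{2}\sin\theta)$, i.e.\ at perpendicular distance $\frac{\sqrt{3}}{2}$ from $AB$. Intersecting $\ell_\theta$ with the unit sphere around $A$ (using $\frac14+\frac34=1$) gives two blue points at $x$-coordinates $\pm\frac12$, and intersecting with the unit sphere around $B$ gives two blue points at $x$-coordinates $\frac72$ and $\frac92$. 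Label these four forced-blue points $P_1,P_2,P_5,P_6$. Let $P_3,P_4$ be the two points on $\ell_\theta$ with $x$-coordinates $\frac32$ and $\frac52$. Then $P_1,\dots,P_6$ are collinear on $\ell_\theta$ with consecutive distance $1$, so colouring both $P_3$ and $P_4$ blue would produce a blue $\ell_6$.

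Next, consider any point $X$ at distance $1$ from both $P_3$ and $P_4$. The set of such $X$ is the circle $C_\theta$ lying in the plane $x=2$, centred at $(2,\frac{\sqrt{3}}{2}\cos\theta,\frac{\sqrt{3}}{2}\sin\theta)$, with radius $\sqrt{1-\frac14}=\frac{\sqrt{3}}{2}$. If some $X\in C_\theta$ were red, then $P_3$ and $P_4$ would each lie at distance $1$ from a red point and hence be blue, making $P_1P_2P_3P_4P_5P_6$ a blue $\ell_6$. So every point of $C_\theta$ is blue. As $\theta$ varies over $[0,2\pi)$, the centre of $C_\theta$ traces a circle of radius $\frac{\sqrt{3}}{2}$ in the plane $x=2$ centred at $(2,0,0)$, and this radius equals the radius of $C_\theta$ itself. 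Therefore $\bigcup_\theta C_\theta$ is exactly the closed disk in the plane $x=2$ of radius $\sqrt{3}$ centred at $(2,0,0)$. This blue disk contradicts Lemma~\ref{l:disk}, completing the proof. I do not foresee any real obstacle; the only real design choice is the perpendicular distance $\frac{\sqrt{3}}{2}$, picked so that the two unit spheres cut $\ell_\theta$ in four points occupying positions $0,1,4,5$ of a unit-spaced $\ell_6$ of length $5$, leaving precisely two gaps that can be simultaneously controlled by a single auxiliary point $X$.
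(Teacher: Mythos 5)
Your proposal is correct and follows essentially the same route as the paper's own proof: force the four blue points at $x=-\tfrac12,\tfrac12,\tfrac72,\tfrac92$ on a line at distance $\tfrac{\sqrt3}{2}$ from $AB$, deduce that every point at distance $1$ from both gap points $P_3,P_4$ must be blue, and rotate about $AB$ to sweep out a blue disk of radius $\sqrt3$ in the plane $x=2$, contradicting Lemma~\ref{l:disk}. Your explicit $\theta$-parametrization and the verification that the union of the circles $C_\theta$ is the full disk just spell out the paper's ``rotate $\ell$ around $AB$'' step.
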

\begin{proof}
The proof is similar to that of Lemma~\ref{l:l62}. Assume that points $A$ and $B$ are red and $|AB|=4$. Choose a rectangular coordinate system centered at $A$ so that $B$ has coordinates $(4,0,0)$. Then the circles $\{(-\frac12,y,z):y^2+z^2=\frac34\}$, $\{(\frac12,y,z):y^2+z^2=\frac34\}$, $\{(\frac72,y,z):y^2+z^2=\frac34\}$ and $\{(\frac92,y,z):y^2+z^2=\frac34\}$ are blue. Consider any line $\ell$ parallel to $AB$ that intersects all the blue circles (for simplicity, let $\ell=\{(x,-\frac{\sqrt{3}}{2},0): x\in \mathbb{R}\}$), and let $P_1, P_2, P_5, P_6$ be the points of intersection (see Figure~\ref{fig:6}, in this case $P_1(-\frac12,-\frac{\sqrt{3}}{2},0)$, $P_2(\frac12,-\frac{\sqrt{3}}{2},0)$, $P_5(\frac72,-\frac{\sqrt{3}}{2},0)$, $P_6(\frac92,-\frac{\sqrt{3}}{2},0)$). Let $X$ be any point on the circle $C$ with radius $\frac{\sqrt{3}}{2}$ centered at the point on $\ell$ with $x$-coordinate $2$ (on the Figure~\ref{fig:6}, the center has coordinates $(2,-\frac{\sqrt{3}}{2},0)$). If $X$ is red, then the points $P_3$ and $P_4$ on $\ell$ with $x$-coordinates $\frac32$ and $\frac52$  (on the Figure~\ref{fig:6} $P_3(\frac32,-\frac{\sqrt{3}}{2},0)$, $P_4(\frac52,-\frac{\sqrt{3}}{2},0)$) are both at distance $1$ to $X$. Then $P_3$ and $P_4$ are blue, and $P_1P_2P_3P_4P_5P_6$ is a blue $\ell_6$. Therefore, $X$ is blue, hence the circle $C$ is blue. When $\ell$ is rotated around $AB$, the circle $C$ spans a blue disk with radius $\sqrt{3}$, which contradicts the statement of Lemma~\ref{l:disk}.
\end{proof}

\begin{figure}[H]
\begin{center}
\begin{tikzpicture}[line cap=round,line join=round,>=triangle 45,x=1.0cm,y=1.0cm, scale=1]

\clip(-4.5,-2.5) rectangle (7.5,2.5);
\begin{scope}[xshift=-3.5cm]
\draw [color=black, opacity=0.5] (0,0)--(4, 0);
\draw [color=black, opacity=0.5] (-0.5,0)++(-120:1)--+(5.7, 0);
\draw (-0.9,-0.5) circle (0pt) node{$\ell$};

\draw [fill=red] (0.,0.) node[diamond, scale=0.5, fill=red] {};
\node [above] at (0,0.1) {{$A$}};

\draw [fill=blue] (0,0)++(-120:1) circle (2pt);
\draw [fill=black] (0,-0.3)++(-120:1) circle (0pt) node {$P_1$};

\draw [fill=blue] (0,0)++(-60:1) circle (2pt);
\draw [fill=black] (0,-0.3)++(-60:1) circle (0pt) node {$P_2$};

\draw [fill=blue] (2,0.) circle (2pt);
\node [above] at (2,0.1) {{$X$}};

\draw [fill=white] (2,0)++(-120:1) circle (2pt);
\draw [fill=black] (2,-0.3)++(-120:1) circle (0pt) node {$P_3$};

\draw [fill=white] (2,0)++(-60:1) circle (2pt);
\draw [fill=black] (2,-0.3)++(-60:1) circle (0pt) node {$P_4$};

\draw [fill=red] (4,0.) node[diamond, scale=0.5, fill=red] {};
\node [above] at (4,0.1) {{$B$}};

\draw [fill=blue] (4,0)++(-120:1) circle (2pt);
\draw [fill=blue] (4,-0.3)++(-120:1) circle (0pt) node {$P_5$};

\draw [fill=blue] (4,0)++(-60:1) circle (2pt);
\draw [fill=black] (4,-0.3)++(-60:1) circle (0pt) node {$P_6$};

\draw [color=blue, dashed] (2,0) arc (90:270:0.2 and 0.865);
\draw [color=blue] (2,0) arc (90:-90:0.2 and 0.865);

\end{scope}

\begin{scope}[xshift=2.5cm]
\draw [color=black, opacity=0.5] (0,0)--(4, 0);
\draw [color=black, opacity=0.5] (0,0)++(-120:1)--+(5, 0);

\draw [fill=red] (0.,0.) node[diamond, scale=0.5, fill=red] {};
\node [above] at (0,0.1) {{$A$}};

\draw [fill=blue] (0,0)++(-120:1) circle (2pt);
\draw [fill=black] (0,-0.3)++(-120:1) circle (0pt) node {$P_1$};

\draw [fill=blue] (0,0)++(-60:1) circle (2pt);
\draw [fill=black] (0,-0.3)++(-60:1) circle (0pt) node {$P_2$};

\draw [fill=blue] (2,0.) circle (2pt);

\draw [fill=white] (2,0)++(-120:1) circle (2pt);
\draw [fill=black] (2,-0.3)++(-120:1) circle (0pt) node {$P_3$};

\draw [fill=white] (2,0)++(-60:1) circle (2pt);
\draw [fill=black] (2,-0.3)++(-60:1) circle (0pt) node {$P_4$};

\draw [fill=red] (4,0.) node[diamond, scale=0.5, fill=red] {};
\node [above] at (4,0.1) {{$B$}};

\draw [fill=blue] (4,0)++(-120:1) circle (2pt);
\draw [fill=black] (4,-0.3)++(-120:1) circle (0pt) node {$P_5$};

\draw [fill=blue] (4,0)++(-60:1) circle (2pt);
\draw [fill=black] (4,-0.3)++(-60:1) circle (0pt) node {$P_6$};

\draw [color=blue, dashed] (2,0)++(60:1)++(120:1) arc (90:270:0.3 and 1.732);
\draw [color=blue] (2,0)++(60:1)++(120:1) arc (90:-90:0.3 and 1.732);

\draw[color=white, pattern=north east lines, pattern color=blue] (2,0) circle (0.3 and 1.732);

\draw [color=blue, dashed] (2,1.732) arc (90:270:0.3 and 1.732);
\draw [color=blue] (2,1.732) arc (90:-90:0.3 and 1.732);

\end{scope}

\end{tikzpicture}
\caption{}\label{fig:6}
\end{center}
\end{figure}

\begin{lemma}\label{l:l63}
Let $\mathbb{E}^3$ be coloured in red and blue so that there are no two red points distance $1$ apart. If there are no six blue points forming an $\ell_6$, then there are no two red points distance $3$ apart.
\end{lemma}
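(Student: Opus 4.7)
The plan is to mimic the template of Lemmas~\ref{l:l62} and~\ref{l:l64}: force four of six collinear points blue, and then use an auxiliary-point rotation argument to sweep out a blue disk of radius $\sqrt{3}$, contradicting Lemma~\ref{l:disk}. The subtlety with $|AB|=3$ is that after the four forced-blue positions are laid down on a line, the two ``missing'' positions are $3$ apart, so no single point $X$ can be at distance $1$ from both of them. The key idea is to use a distance-$4$ auxiliary instead, enabled by Lemma~\ref{l:l64}.

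Concretely I would place $A=(0,0,0)$, $B=(3,0,0)$ and consider the line $\ell=\{(t,-\sqrt{3},0):t\in\mathbb{R}\}$ together with the six equally spaced points $P_i=(i-2,-\sqrt{3},0)$ for $i=1,\ldots,6$. A direct computation gives $|AP_1|=|AP_3|=|BP_4|=|BP_6|=2$, so by Lemma~\ref{l:l62} the points $P_1,P_3,P_4,P_6$ are blue. For any point $X$ with $|XP_2|=|XP_5|=4$, if $X$ were red then $P_2$ and $P_5$ would be blue by Lemma~\ref{l:l64}, completing a blue $\ell_6$; hence $X$ must be blue. The locus of such $X$ is the circle $C$ lying in the plane $x=3/2$, centered at $M=(3/2,-\sqrt{3},0)$, of radius $\sqrt{55}/2$, and so $C$ is entirely blue.

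Rotating $\ell$ about the axis $AB$ by an angle $\phi$ keeps $A$ and $B$ fixed, preserves every forced-blue distance, and rotates the auxiliary to a blue circle $C^{\phi}$ of radius $\sqrt{55}/2$ in the plane $x=3/2$ whose center $M^{\phi}$ traces the circle of radius $\sqrt{3}$ about $(3/2,0,0)$. The union $\bigcup_{\phi}C^{\phi}$ is therefore the annulus in the plane $x=3/2$ with inner radius $\sqrt{55}/2-\sqrt{3}$ and outer radius $\sqrt{55}/2+\sqrt{3}$; its width is exactly $2\sqrt{3}$, so it contains the closed disk of radius $\sqrt{3}$ centered at $(3/2,-\sqrt{55}/2,0)$. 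Lemma~\ref{l:disk} then yields the contradiction. The main obstacle is identifying the correct auxiliary distance: the symmetric distance-$2$ analogue gives only an annulus of width $\sqrt{7}<2\sqrt{3}$, too narrow to contain a disk of radius $\sqrt{3}$, and it is precisely the distance-$4$ constraint furnished by Lemma~\ref{l:l64} that produces an annulus of the right width.
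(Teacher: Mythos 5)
Your proposal is correct and is essentially the paper's own argument: the same four points forced blue at distance $2$ from $A$ or $B$ via Lemma~\ref{l:l62}, the same auxiliary circle of radius $\tfrac{\sqrt{55}}{2}$ of points at distance $4$ from $P_2$ and $P_5$ made blue via Lemma~\ref{l:l64}, and the same rotation about $AB$ sweeping a blue annulus of radii $\tfrac{\sqrt{55}}{2}\pm\sqrt{3}$ that contains a blue disk of radius $\sqrt{3}$, contradicting Lemma~\ref{l:disk}. Your placement of the circle's center at the midpoint of $P_2P_5$ (the $x$-coordinate $\tfrac{5}{2}$ in the paper's coordinates) is the correct one, matching the paper's figure rather than the $\tfrac72$ appearing in its text.
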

\begin{proof}
Assume that points $A$ and $B$ are red and $|AB|=3$. Choose a rectangular coordinate system so that $A$ has coordinates $(1,0,0)$ and $B$ has coordinates $(4,0,0)$. Then, by Lemma~\ref{l:l62}, any point at distance $2$ to $A$ is blue; in particular, the circles $\{(0,y,z):y^2+z^2=3\}$ and $\{(2,y,z):y^2+z^2=3\}$ are blue. By the same argument, circles $\{(3,y,z):y^2+z^2=3\}$ and $\{(5,y,z):y^2+z^2=3\}$ are blue, since their points are at distance $2$ from $B$. 
Consider any line $\ell$ parallel to $AB$ that intersects all the blue circles (for simplicity, let $\ell=\{(x,-\sqrt{3},0): x\in\mathbb{R}\}$), and let $P_1, P_3, P_4, P_6$ be the points of intersection (see Figure~\ref{fig:7}, in this case $P_1(0,-\sqrt{3},0)$, $P_3(2,-\sqrt{3},0)$, $P_4(3,-\sqrt{3},0)$, $P_6(5,-\sqrt{3},0)$). Let $X$ be any point on the circle $C$ with radius $\frac{\sqrt{55}}{2}$ centered at the point on $\ell$ with $x$-coordinate $\frac72$ (on the Figure~\ref{fig:7}, the center has coordinates $(\frac72,-\sqrt{3},0)$). If $X$ is red, then the points $P_2$ and $P_5$ on $\ell$ with $x$-coordinates $1$ and $4$ (on the Figure~\ref{fig:7}, $P_2(1,-\sqrt{3},0)$, $P_5(4,-\sqrt{3},0)$) are both at distance $4$ to $X$. Then,by Lemma~\ref{l:l64}, $P_2$ and $P_5$ are blue, and $P_1P_2P_3P_4P_5P_6$ is a blue $\ell_6$. Therefore, $X$ is blue, hence the circle $C$ is blue. When $\ell$ is rotated around $AB$, the circle $C$ spans the blue annulus bounded by circles of radii $\frac{\sqrt{55}}{2}+\sqrt{3}$ and $\frac{\sqrt{55}}{2}-\sqrt{3}$, which contains a blue disk of radius $\sqrt{3}$. By Lemma~\ref{l:disk}, this leads to a contradiction with the fact that there are no two red points distance $1$ apart.
\end{proof}

\begin{figure}[H]
\begin{center}
\begin{tikzpicture}[line cap=round,line join=round,>=triangle 45,x=1.0cm,y=1.0cm, scale=0.8]
\tikzstyle{dc}   = [circle, minimum width=6pt, draw, inner sep=0pt, path picture={\draw (path picture bounding box.south east) -- (path picture bounding box.north west) (path picture bounding box.south west) -- (path picture bounding box.north east);}]

\clip(-5.5,-6.5) rectangle (7.5,5.5);
\begin{scope}[xshift=-3cm]
\draw [color=black, opacity=0.5] (0,0)--(3, 0);
\draw [color=black, opacity=0.5] (-0.5,0)++(-120:2)--+(5.7, 0);
\draw [color=black, dashed] (0,0)--+(-120:2);
\draw [color=black, dashed] (0,0)--+(-60:2);
\draw [color=black, dashed] (3,0)--+(-120:2);
\draw [color=black, dashed] (3,0)--+(-60:2);
\draw (-1.4,-1.4) circle (0pt) node{$\ell$};

\draw [fill=red] (0.,0.) node[diamond, scale=0.5, fill=red] {};);
\node [above] at (0,0.1) {{$A$}};

\draw [fill=blue] (0,0)++(-120:2) circle (2pt);
\draw [fill=black] (0,-0.3)++(-120:2) circle (0pt) node {$P_1$};

\draw [fill=white] (1,0)++(-120:2) circle (2pt);
\draw [fill=black] (1,-0.3)++(-120:2) circle (0pt) node {$P_2$};

\draw [fill=blue] (1.5,0)++(-120:1)++(-60:1)++(0,-3.708) circle (2pt);
\draw [fill=blue] (1.5,-0.3)++(-120:1)++(-60:1)++(0,-3.708) circle (0pt) node {$X$};

\draw [fill=blue] (0,0)++(-60:2) circle (2pt);
\draw [fill=black] (0,-0.3)++(-60:2) circle (0pt) node {$P_3$};

\draw [fill=blue] (1,0)++(-60:2) circle (2pt);
\draw [fill=black] (1,-0.3)++(-60:2) circle (0pt) node {$P_4$};

\draw [fill=red] (3,0.) node[diamond, scale=0.5, fill=red] {};
\node [above] at (3,0.1) {{$B$}};

\draw [fill=white] (2,0)++(-60:2) circle (2pt);
\draw [fill=blue] (2,-0.3)++(-60:2) circle (0pt) node {$P_5$};

\draw [fill=blue] (3,0)++(-60:2) circle (2pt);
\draw [fill=black] (3,-0.3)++(-60:2) circle (0pt) node {$P_6$};

\draw [color=blue, dashed] (1.5,0)++(-120:1)++(-60:1)++(0,3.708) arc (90:270:0.4 and 3.708);
\draw [color=blue] (1.5,0)++(-120:1)++(-60:1)++(0,3.708) arc (90:-90:0.4 and 3.708);

\end{scope}

\begin{scope}[xshift=3cm]
\draw [color=black, opacity=0.5] (0,0)--(3, 0);
\draw [color=black, opacity=0.5] (0,0)++(-120:2)--+(5, 0);

\draw [fill=red] (0.,0.) node[diamond, scale=0.5, fill=red] {};
\node [above] at (0,0.1) {{$A$}};

\draw [fill=blue] (0,0)++(-120:2) circle (2pt);
\draw [fill=black] (0,-0.3)++(-120:2) circle (0pt) node {$P_1$};

\draw [fill=white] (1,0)++(-120:2) circle (2pt);
\draw [fill=black] (1,-0.3)++(-120:2) circle (0pt) node {$P_2$};

\draw [fill=blue] (1.5,0)++(-120:1)++(-60:1)++(0,-3.708) circle (2pt);
\draw [fill=blue] (1.5,-0.3)++(-120:1)++(-60:1)++(0,-3.708) circle (0pt) node {$X$};

\draw [fill=blue] (0,0)++(-60:2) circle (2pt);
\draw [fill=black] (0,-0.3)++(-60:2) circle (0pt) node {$P_3$};

\draw [fill=blue] (1,0)++(-60:2) circle (2pt);
\draw [fill=black] (1,-0.3)++(-60:2) circle (0pt) node {$P_4$};

\draw [fill=red] (3,0.) node[diamond, scale=0.5, fill=red] {};
\node [above] at (3,0.1) {{$B$}};

\draw [fill=white] (2,0)++(-60:2) circle (2pt);
\draw [fill=blue] (2,-0.3)++(-60:2) circle (0pt) node {$P_5$};

\draw [fill=blue] (3,0)++(-60:2) circle (2pt);
\draw [fill=black] (3,-0.3)++(-60:2) circle (0pt) node {$P_6$};

\draw [color=blue, dashed] (1.5,0)++(0,3.708+1.732) arc (90:270:1 and 3.708+1.732);
\draw [color=blue] (1.5,0)++(0,3.708+1.732) arc (90:-90:1 and 3.708+1.732);

\path [draw=none, pattern=north east lines, pattern color=blue, fill opacity =1, even odd rule] (1.5,0) circle (1 and 3.708+1.732) (1.5,0) circle (0.3 and 3.708-1.732);

\draw [color=blue, dashed] (1.5,0)++(0,3.708-1.732) arc (90:270:0.3 and 3.708-1.732);
\draw [color=blue] (1.5,0)++(0,3.708-1.732) arc (90:-90:0.3 and 3.708-1.732);

\draw [color=blue, dashed] (1.5,0)++(0,3.708+1.732) arc (90:270:0.3 and 1.732);
\draw [color=blue] (1.5,0)++(0,3.708+1.732) arc (90:-90:0.3 and 1.732);

\draw [color=black, dashed](1.5,0)++(0,3.708+1.732)--+(1.5, 0);
\draw [color=black, dashed] (1.5,0)++(0,3.708-1.732)--+(1.5, 0);
\draw [color=black, dashed, <->, thin] (1.5,0)++(0,3.708+1.732)++(1.5, 0)--+(0,-2*1.732);
\draw [fill=blue] (1.5,0)++(0,3.708+1.732)++(1.5, 0)++(0.4,-1.732) circle (0pt) node {$2\sqrt{3}$};

\draw [color=blue, dashed] (1.5,0)++(-120:1)++(-60:1)++(0,3.708) arc (90:270:0.4 and 3.708);
\draw [color=blue] (1.5,0)++(-120:1)++(-60:1)++(0,3.708) arc (90:-90:0.4 and 3.708);

\end{scope}

\end{tikzpicture}
\caption{}\label{fig:7}
\end{center}
\end{figure}

\begin{lemma}\label{r3}
Let $\mathcal{L}$ be a unit triangular lattice on a plane. Let points of $\mathcal{L}$ be coloured in red and blue so that there is no red $\ell_2$ and no blue $\ell_6$. If $\mathcal{L}$ contains two red points distance $\sqrt{3}$ apart, then $\mathcal{L}$ does not contain a blue $\ell_5$.
\end{lemma}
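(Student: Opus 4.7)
The plan is to argue by contradiction: suppose $R_1, R_2\in\mathcal{L}$ are red with $|R_1R_2|=\sqrt{3}$ and that $Q_1Q_2Q_3Q_4Q_5\subset \mathcal{L}$ is a blue $\ell_5$. The goal is to produce a blue $\ell_6$, contradicting the standing hypothesis that no such configuration exists.

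The first step is to invoke Lemmas~\ref{l:l62},~\ref{l:l63}, and~\ref{l:l64}: together with the initial assumption, these imply that in all of $\mathbb{E}^3$ no two red points are at distance $1$, $2$, $3$, or $4$. Consequently every lattice point at one of these distances from $R_1$ or $R_2$ is blue. Choose coordinates in the plane of $\mathcal{L}$ so that $R_1=(0,0)$ and $R_2=(0,\sqrt{3})$; then $R_1R_2$ is perpendicular to one of the three lattice-line directions. Enumerating the forced-blue lattice points around $R_1$ and $R_2$ gives a rich structure: on each of the three lattice lines through $R_1$ the eight lattice points at signed distances $\pm 1,\pm 2,\pm 3,\pm 4$ are blue, and similarly around $R_2$, leaving little room for a long blue run without creating a blue $\ell_6$.

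The second step is a case analysis on the line $m$ supporting the supposed $\ell_5$, sorted by direction (one of three lattice directions, up to reflection) and by perpendicular distance from $R_1R_2$. In each case I would locate a lattice point $Q_0$ neighbouring $Q_1$ on $m$, or $Q_6$ neighbouring $Q_5$ on $m$, whose distance from $R_1$ or $R_2$ falls in $\{1,2,3,4\}$; such a point is forced blue by the lemmas of step one, and adjoining it to $Q_1\ldots Q_5$ produces the forbidden blue $\ell_6$. When $m$ passes through or close to $R_1R_2$ this is a short direct check against the list of forced blues and dispatches most configurations.

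The main obstacle is the case when $m$ is far enough from both $R_1$ and $R_2$ that no single-step lattice extension of the $\ell_5$ is itself forced blue by $R_1$ or $R_2$ alone. In this range one must iterate: the absence of a blue $\ell_6$ forces the lattice neighbours immediately adjacent to $Q_1$ or $Q_5$ on $m$ to be red, which yields new red points; applying Lemmas~\ref{l:l62},~\ref{l:l63}, and~\ref{l:l64} to these new reds turns further lattice points blue, and the resulting cascade eventually overlaps either the $\ell_5$ itself or a row parallel to $m$ in such a way that six consecutive blues appear, giving the contradiction. An attractive alternative, if this cascade proves awkward, is to accumulate enough forced blues in a neighbourhood of $R_1R_2$ to fill a disc of radius $\sqrt{3}$ and invoke Lemma~\ref{l:disk} directly.
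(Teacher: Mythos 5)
Your proposal has the right ingredients at the start (the red pair at distance $\sqrt{3}$, together with Lemmas~\ref{l:l62}, \ref{l:l63}, \ref{l:l64}, forces many lattice points to be blue), but it stops exactly where the actual work begins. The lemma must exclude a blue $\ell_5$ \emph{anywhere} in the infinite lattice, so your case analysis ``by perpendicular distance from $R_1R_2$'' has unboundedly many cases, and you concede that when the supporting line $m$ is far from $R_1R_2$ no one-step extension is forced. Your fix --- ``the neighbours $Q_0,Q_6$ of the $\ell_5$ are red, apply the lemmas to these new reds, and the cascade eventually produces six consecutive blues'' --- is precisely the unproved part. Note that a new red point on $m$ only forces blue the lattice points at distances $1,2,3,4$ from it; on the adjacent lattice rows the relevant distances are $1,\sqrt{3},\sqrt{7},\sqrt{13},\dots$, so almost nothing propagates off $m$, and even along $m$ the next undetermined point (at distance $5$ from $Q_0$) is not forced either way without further ideas. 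Nothing in your sketch shows the cascade terminates in a contradiction, so this is a genuine gap rather than a routine verification. The fallback via Lemma~\ref{l:disk} also fails as stated: that lemma needs a \emph{solid} blue disk of radius $\sqrt{3}$, whereas the blues you accumulate from finitely many red centres lie on spheres/circles (and on lattice points), a measure-zero set that can never fill a disk; the paper only obtains blue disks by first forcing a continuum of points blue through an $\ell_6$-extension argument, which you do not set up.

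The missing idea is a propagation/uniqueness argument rather than a case split on the hypothetical $\ell_5$. The paper shows that if $A_1,A_2$ are red with $|A_1A_2|=\sqrt{3}$, then the point $A_3$ obtained by reflecting $A_1$ in $A_2$ is forced red (otherwise the blues forced by Lemmas~\ref{l:l62}--\ref{l:l64} around $A_1,A_2$ complete a blue $\ell_6$), hence the whole line through $A_1A_2$ carries red points at every multiple of $\sqrt{3}$; a further forcing step produces a parallel red line at the appropriate offset, and iterating determines the colouring of $\mathcal{L}$ uniquely (the periodic pattern of Figure~\ref{fig:10}). One then simply observes that this unique colouring contains no blue $\ell_5$, because every five consecutive lattice points in any of the three directions meet a red line. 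To repair your write-up you would need either to prove such a propagation statement (which makes the case analysis unnecessary) or to carry out your cascade explicitly and show it closes off every position and direction of the alleged $\ell_5$; as it stands, neither is done.
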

\begin{proof}
Let $A_1$ and $A_2$ be two red nodes of $\mathcal{L}$ distance $\sqrt{3}$ apart. First, it is proved that the node $A_3$ symmetric to $A_1$ in $A_2$ is also red. 
Consider the part of $\mathcal{L}$ depicted in Figure~\ref{fig:8a}.
Points $Q_3$ and $Q_6$ are at distance $3$ from $A_1$, and therefore are blue by Lemma~\ref{l:l63}. Points $Q_4$ and $Q_5$ are at distance $1$ from $A_2$, therefore blue. Then point $P_1$ is blue; otherwise points $Q_1$ and $Q_2$ are both blue and form a blue $\ell_6$ with points $Q_3$, $Q_4$, $Q_5$, $Q_6$.
Points $P_2$ and $P_6$ are at distance $4$ from $A_1$, therefore are blue by Lemma~\ref{l:l64}. Points $P_3$ and $P_5$ are at distance $2$ from $A_2$, therefore are blue by Lemma~\ref{l:l62}. Then the point $A_3$ has to be red in order to prevent the blue $P_1P_2P_3A_3P_5P_6$.\\
Using the same argument, it can be proved that the node $A_4$ symmetric to $A_2$ in $A_3$ is red, and similarly for any $k\in\mathbb{Z}$ a point on the line $A_1A_2$ at distance $k\sqrt{3}$ from $A_1$ is red.\\
Consider five consecutive red nodes $A_1$, $A_2$, $A_3$, $A_4$, $A_5$ on the line $A_1A_2$ and the part of the lattice depicted in Figure~\ref{fig:8b}. By Lemmas \ref{l:l62}, \ref{l:l63}, \ref{l:l64}, the points $P_1$, $P_2$, $P_3$, $P_4$ are blue, since they are at distance $1$, $2$, $3$ and $4$ from $A_1$, respectively. By Lemma~\ref{l:l63}, the point $P_6$ is blue (since it is $3$ apart from $A_4$). Then point $P_5$ is red (otherwise $P_1$, $P_2$, $P_3$, $P_4$, $P_6$ form a blue $\ell_6$). Similarly, the points $Q_1$, $Q_2$, $Q_3$, $Q_6$ are blue, hence $Q_5$ is red. Then $P_5$ and $Q_5$ are two red nodes distance $\sqrt{3}$ apart, which forces all the nodes of $\mathcal{L}$ on the line $P_5Q_5$ to be red.\\
\begin{figure}[H]
    \begin{subfigure}{0.5\linewidth}
        
\begin{tikzpicture}[line cap=round,line join=round,>=triangle 45,x=1.0cm,y=1.0cm, scale=0.8]

\clip(-4,-1) rectangle (4,5.5);

\foreach \a in {1,2,...,3}
\draw [dashed, opacity=0.5] (-4,0)++(60:\a-1)++(120:\a-1)--+(6,0);

\foreach \a in {1,2}
\draw [dashed, opacity=0.5] (-4.5,0)++(60:\a)++(120:\a-1)--+(6,0);

\foreach \a in {1,2,...,6}
\draw [dashed, opacity=0.5] (-6+\a,0)--+(60:4);

\foreach \a in {1,2,...,6}
\draw [dashed, opacity=0.5] (3-\a,0)--+(120:4);

\draw [dashed, opacity=0.5] (1,0)--+(60:2);

\draw [dashed, opacity=0.5] (1,0)++(60:2)++(120:2)--+(-60:2);

\draw [fill=red,  opacity=1] (0,0) node[diamond, scale=0.7, fill=red] {};
\draw [fill=red] (0,0)++(0,-0.4) circle (0pt) node {$A_1$};

\draw [fill=red] (0,0)++(60:1)++(120:1) node[diamond, scale=0.7, fill=red] {};
\draw [fill=red] (0,0)++(0,-0.4)++(60:1)++(120:1) circle (0pt) node {$A_2$};

\foreach \a in {3,4,5,6}
\draw [fill=blue] (0,0)++(120:3)++(\a-3,0) circle (2pt);

\foreach \a in {3,4,5,6}
\draw [fill=red] (0,0)++(0,-0.4)++(120:3)++(\a-3,0) circle (0pt) node {$Q_\a$};

\draw [fill=white] (0,0)++(120:3)++(-1,0) circle (2pt); 
\draw [fill=red] (0,0)++(0,-0.4)++(120:3)++(-1,0) circle (0pt) node {$Q_2$};

\draw [fill=white] (0,0)++(120:3)++(-2,0) circle (2pt);
\draw [fill=red] (0,0)++(0,-0.4)++(120:3)++(-2,0) circle (0pt) node {$Q_1$};

\draw [fill=blue] (0,0)++(120:4)++(-1,0) circle (2pt);
\draw [fill=red] (0,0)++(0,-0.4)++(120:4)++(-1,0) circle (0pt) node {$P_1$};

\draw [fill=blue] (0,0)++(120:4)++(0,0) circle (2pt);
\draw [fill=red] (0,0)++(0,-0.4)++(120:4)++(0,0) circle (0pt) node {$P_2$};

\draw [fill=blue] (0,0)++(120:4)++(1,0) circle (2pt);
\draw [fill=red] (0,0)++(0,-0.4)++(120:4)++(1,0) circle (0pt) node {$P_3$};

\draw [fill=white] (0,0)++(120:4)++(2,0) circle (2pt);
\draw [fill=red] (0,0)++(0,-0.4)++(120:4)++(2,0) circle (0pt) node {$A_3$};

\draw [fill=blue] (0,0)++(120:4)++(3,0) circle (2pt);
\draw [fill=red] (0,0)++(0,-0.4)++(120:4)++(3,0) circle (0pt) node {$P_5$};

\draw [fill=blue] (0,0)++(120:4)++(4,0) circle (2pt);
\draw [fill=red] (0,0)++(0,-0.4)++(120:4)++(4,0) circle (0pt) node {$P_6$};

\end{tikzpicture}
        \caption{}
        \label{fig:8a}
    \end{subfigure}%
    \begin{subfigure}{0.5\linewidth}
        \centering
\begin{tikzpicture}[line cap=round,line join=round,>=triangle 45,x=1.0cm,y=1.0cm, scale=0.8]
\clip(-0.5,-0.8) rectangle (4,7.5);

\foreach \a in {1,...,10}
\draw [dashed, opacity=0.5] (-2,0)++(60:\a-2)++(120:\a-2)--+(10,0);

\foreach \a in {1,...,20}
\draw [dashed, opacity=0.5] (-10,0)++(\a,0)--+(60:10);

\foreach \a in {1,...,20}
\draw [dashed, opacity=0.5] (-10,0)++(\a,0)--+(120:10);

\foreach \a in {1,2,...,10}
\draw [dashed, opacity=0.5] (-2,0)++(60:\a-3)++(120:\a-4)--+(10,0);

\foreach \a in {1,2,...,5}
\draw [fill=red] (0,0)++(60:\a-1)++(120:\a-1) node[diamond, scale=0.7, fill=red] {};

\foreach \a in {1,2,...,5}
\draw [fill=red] (0,0)++(0,-0.4)++(60:\a-1)++(120:\a-1) circle (0pt) node {$A_\a$};

\foreach \a in {1,2,...,6}
\draw [fill=blue] (0,0)++(60:\a) circle (2pt);

\foreach \a in {1,2,...,6}
\draw [fill=blue] (0,0)++(0,-0.4)++(60:\a) circle (0pt) node {$P_\a$};

\foreach \a in {1,2,...,6}
\draw [fill=blue] (0,0)++(60:4)++(120:4)++(-60:\a) circle (2pt);

\foreach \a in {1,2,...,3}
\draw [fill=blue] (0,-0.4)++(60:4)++(120:4)++(-60:\a) circle (0pt) node {$Q_\a$};

\foreach \a in {5,6}
\draw [fill=blue] (0,-0.4)++(60:4)++(120:4)++(-60:\a) circle (0pt) node {$Q_\a$};

\draw [fill=red] (0,0)++(60:4)++(120:4)++(-60:5) node[diamond, scale=0.7, fill=red] {};

\draw [fill=red] (0,0)++(60:5) node[diamond, scale=0.7, fill=red] {};

\end{tikzpicture}
        \caption{}
        \label{fig:8b}
    \end{subfigure}
    \caption{} 
    \label{fig:8}
\end{figure}

If the colouring is expanded further in a similar way, the plane will be coloured in the unique way, which is shown in Figure~\ref{fig:10}. Every $\ell_5$ that belongs to $\mathcal{L}$ contains a red point, therefore the colouring does not contain a blue $\ell_5$.
\end{proof}

\begin{figure}[H]
\begin{center}
\begin{tikzpicture}[line cap=round,line join=round,>=triangle 45,x=1.0cm,y=1.0cm,scale=0.8]
\clip(-2.9,0.3) rectangle (12.7,6.5);

\foreach \a in {0,1,...,10}
\draw [dashed, opacity=0.5] (-10,0)++(60:\a)++(120:\a)--+(32,0);

\foreach \a in {0,1,...,10}
\draw [dashed, opacity=0.5] (-10,0)++(60:\a+1)++(120:\a)--+(32,0);

\foreach \a in {0,1,...,30}
\draw [dashed, opacity=0.5] (-10,0)++(\a,0)--+(60:10);

\foreach \a in {0,1,...,30}
\draw [dashed, opacity=0.5] (-10,0)++(\a,0)--+(120:10);

\foreach \b in {0,1,...,20}
\foreach \a in {0,1,...,20}
\draw [fill=blue] (-5,0)++(\a,0.)++(60:\b)++(120:\b) circle (2pt);

\foreach \b in {0,1,...,20}
\foreach \a in {0,1,...,20}
\draw [fill=blue] (-5,0)++(\a,0)++(60:1)++(60:\b)++(120:\b) circle (2pt);

\foreach \a in {0,...,5}
\draw [fill=red] (0,0)++(60:\a)++(120:\a) node[diamond, scale=0.7, fill=red] {};

\foreach \a in {0,...,4}
\draw [fill=red] (2,0)++(60:\a+1)++(120:\a) node[diamond, scale=0.7, fill=red] {};

\foreach \a in {0,...,5}
\draw [fill=red] (5,0)++(60:\a)++(120:\a) node[diamond, scale=0.7, fill=red] {};

\foreach \a in {0,...,4}
\draw [fill=red] (7,0)++(60:\a+1)++(120:\a) node[diamond, scale=0.7, fill=red] {};

\foreach \a in {0,...,4}
\draw [fill=red] (12,0)++(60:\a+1)++(120:\a) node[diamond, scale=0.7, fill=red] {};

\foreach \a in {0,...,4}
\draw [fill=red] (-3,0)++(60:\a+1)++(120:\a) node[diamond, scale=0.7, fill=red] {};

\foreach \a in {0,...,5}
\draw [fill=red] (10,0)++(60:\a)++(120:\a) node[diamond, scale=0.7, fill=red] {};


\end{tikzpicture}
\end{center}
\caption{}\label{fig:10}
\end{figure}

\begin{proof}[Proof of the Theorem~\ref{thm:l6}]
Suppose that $\mathbb{E}^3$ is coloured in two colours so that there is no red $\ell_2$ and no blue $\ell_6$. By Theorem~\ref{thm:l5}, there is a blue $\ell_5$, say, $X_1X_2X_3X_4X_5$. Consider any unit triangular lattice $\mathcal{L}$ such that $X_1$, $X_2$, $X_3$, $X_4$, $X_5$ are nodes of $\mathcal{L}$. Since $\mathcal{L}$ does not contain a blue $\ell_6$, there is a red node $A$ in $\mathcal{L}$. Consider the part of $\mathcal{L}$ depicted in Figure~\ref{fig:9}. The points $P_2$ and $P_3$ are blue, since they are distance $1$ apart from $A$. Since $\mathcal{L}$ contains a blue $\ell_5$, by Lemma~\ref{r3}, there are no two red nodes of $\mathcal{L}$ distance $\sqrt{3}$ apart, therefore points $P_1$ and $P_4$ are blue. Then points $B$ and $C$ can not be both blue (otherwise a blue $\ell_6$ is formed), therefore one of them, say, $B$, is red. Then points $Q_5$ and $Q_6$ are at distance $1$ and $\sqrt{3}$ from $B$, hence blue. The points $Q_3$, $Q_2$, $Q_1$ are distance $1$, $1$, $\sqrt{3}$ apart from $A$, respectively, therefore blue. Hence, the points $Q_1$, $Q_2$, $Q_3$, $P_1$, $Q_5$, $Q_6$ form a blue $\ell_6$, which contradicts the initial assumption.
\end{proof}

\begin{figure}[H]
\begin{center}
\begin{tikzpicture}[line cap=round,line join=round,>=triangle 45,x=1.0cm,y=1.0cm]
\clip(-3,-3.2) rectangle (3,1.85);

\foreach \a in {1,2,...,10}
\draw [dashed, opacity=0.5] (-6,0)++(-60:1)++(-120:2)++(\a-1,0)--+(60:5);

\foreach \a in {1,2,...,10}
\draw [dashed, opacity=0.5] (6,0)++(-60:1)++(-120:2)++(-\a+1,0)--+(120:5);

\foreach \a in {1,2,...,3}
\draw [dashed, opacity=0.5] (0,0)++(-60:2-\a)++(-120:2-\a)++(-4,0)--+(7,0);

\foreach \a in {1,2,...,3}
\draw [dashed, opacity=0.5] (0,0)++(-60:2-\a+1)++(-120:2-\a)++(-4,0)--+(7,0);

\draw [fill=red] (0,0) node[diamond, scale=0.7, fill=red] {};
\draw [fill=red] (0,0)++(0,-0.4) circle (0pt) node {$A$};

\foreach \a in {1,2,3,4}
\draw [fill=blue] (0,0)++(-120:1)++(\a-2,0) circle (2pt);

\foreach \a in {1,2,3,4}
\draw [fill=blue] (0,-0.4)++(-120:1)++(\a-2,0) circle (0pt) node {$P_\a$};

\foreach \a in {1,2,3}
\draw [fill=blue] (0,0)++(120:1)++(60:1)++(-120:\a-1) circle (2pt);

\foreach \a in {1,2,3}
\draw [fill=blue] (0,-0.4)++(120:1)++(60:1)++(-120:\a-1) circle (0pt) node {$Q_\a$};

\foreach \a in {5,6}
\draw [fill=white] (0,0)++(120:1)++(60:1)++(-120:\a-1) circle (2pt);

\foreach \a in {5,6}
\draw [fill=blue] (0,-0.4)++(120:1)++(60:1)++(-120:\a-1) circle (0pt) node {$Q_\a$};

\foreach \a in {1,2,3}
\draw [fill=blue] (0,0)++(120:1)++(60:1)++(-60:\a-1) circle (2pt);

\draw [fill=white] (0,0)++(2,0)++(-60:1) circle (2pt);
\draw [fill=white] (0,-0.4)++(2,0)++(-60:1) circle (0pt) node {$C$};

\draw [fill=red] (0,0)++(-3,0)++(-60:1) node[diamond, scale=0.7, fill=red] {};
\draw [fill=red] (0,-0.4)++(-3,0)++(-60:1) circle (0pt) node {$B$};

\end{tikzpicture}
\end{center}
\caption{}\label{fig:9}
\end{figure}

\section{Acknowledgements}
The authors would like to thank Ron Graham and Roz\'alia Juh\'asz for providing information about the current state of the problem.

\begin{bibdiv}
\begin{biblist}[\normalsize]

\bib{erdos1}{article}{
   author={Erd\H os, P.},
   author={Graham, R. L.},
   author={Montgomery, Paul},
   author={Rothschild, B. L.},
   author={Spencer, J.},
   author={Straus, E. G.},
   title={\emph{Euclidean Ramsey theorems. I}},
   journal={\emph{J. Combin. Theory Ser. A}},
   volume={14},
   date={1973},
   pages={341--363},
}

\bib{erdos2}{article}{
   author={Erd\H os, Paul},
   author={Graham, R. L.},
   author={Montgomery, P.},
   author={Rothschild, B. L.},
   author={Spencer, J.},
   author={Straus, E. G.},
   title={\emph{Euclidean Ramsey theorems. II}},
   conference={
      title={\emph{Infinite and finite sets (Colloq., Keszthely, 1973; dedicated
      to P. Erd\H os on his 60th birthday), Vol. I}},
   },
   book={
      publisher={North-Holland, Amsterdam},
   },
   date={1975},
   pages={529--557. Colloq. Math. Soc. J\'anos Bolyai, Vol. 10},
}

\bib{erdos3}{article}{
   author={Erd\H os, P.},
   author={Graham, R. L.},
   author={Montgomery, P.},
   author={Rothschild, B. L.},
   author={Spencer, J.},
   author={Straus, E. G.},
   title={\emph{Euclidean Ramsey theorems. III}},
   conference={
      title={\emph{Infinite and finite sets (Colloq., Keszthely, 1973; dedicated
      to P. Erd\H os on his 60th birthday), Vol. I}},
   },
   book={
      publisher={North-Holland, Amsterdam},
   },
   date={1975},
   pages={559--583. Colloq. Math. Soc. J\'anos Bolyai, Vol. 10},
}

\bib{graham}{book}{
   title={\emph{Euclidean Ramsey theory, in} Handbook of discrete and computational geometry \emph{(J. E. Goodman and J. O'Rourke, Eds.)}},
   edition={2},
   author={R. L. Graham},
   editor={},
   publisher={Chapman \& Hall/CRC, Boca Raton, FL},
   date={2004},
}

\bib{ivan}{thesis}{
   author={Iv\'an, L\'aszl\'o},
   title={Monochromatic point sets in the plane and in the space},
   date={1979},
   note={Master’s Thesis, University of Szeged, Bolyai Institute (in Hungarian)}
}

\bib{juhasz}{article}{
   author={Juh\'asz, Roz\'alia},
   title={\emph{Ramsey type theorems in the plane}},
   journal={\emph{J. Combin. Theory Ser. A}},
   volume={27},
   date={1979},
   pages={152--160},
}

\end{biblist}
\end{bibdiv}

\end{document}